\newcommand\org@hypertarget{}
\let\org@hypertarget\hypertarget
\renewcommand\hypertarget[2]{%
  \Hy@raisedlink{\org@hypertarget{#1}{}}#2%
} 
\newtheorem{theorem}{Theorem}[section]
\newtheorem{lemma}[theorem]{Lemma}
\newtheorem{Key lemma}[theorem]{Key lemma}
\newtheorem{proposition}[theorem]{Proposition}
\theoremstyle{definition}
\newtheorem{definition}[theorem]{Definition}
\newtheorem{remark}[theorem]{Remark}
\newtheorem{conjecture}[theorem]{Conjecture}
\renewcommand{\labelenumi}{(\roman{enumi})}
\newcommand{\xysquare}[8]{
\[\xymatrix{
#1 \ar@{#5}[r] \ar@{#6}[d] & #2 \ar@{#7}[d]\\
#3 \ar@{#8}[r] & #4
}\]
}
\newcommand{\bb}{\mathbb}
\newcommand{\comment}[1]{}
\renewcommand{\phi}{\varphi}
\newcommand{\roi}{\mathcal{O}}
\newcommand{\ul}[1]{\underline{#1}}
\newcommand{\xto}{\xrightarrow}
\renewcommand{\cal}{\mathcal}
\renewcommand{\hat}{\widehat}
\renewcommand{\tilde}{\widetilde}
\renewcommand{\projlim}{\varprojlim}
\DeclareMathOperator{\Frac}{Frac}
\DeclareMathOperator{\Spec}{Spec}
\newcommand{\CH}{C\!H}
\def\T{{\cal T}}
\def\et{{\mathrm{\acute{e}t}}} 
\newcommand{\K}{\hat{\cal K}}
\DeclareSymbolFontAlphabet{\mathbbm}{bbold}
\begin{document}
\itemsep0pt

\title{The Gersten conjecture for $p$-adic \'etale Tate twists and the $p$-adic cycle class map}

\author{Morten L\"uders}

\date{}

\maketitle

\begin{abstract}
We prove the Gersten conjecture for $p$-adic \'etale Tate twists for a smooth scheme $X$ in mixed characteristic in the Nisnevich topology. Our main observation is that, while $p$-adic \'etale Tate twists are not $\bb A^1$-invariant, for the proof  of the Gersten conjecture it suffices that they satisfy the $\bb P^1$-bundle formula. This fits nicely with the emphasis on the projective bundle formula in non $\bb A^1$-invariant motivic cohomology recently developed by Elmanto-Morrow and Annala-Hoyois-Iwasa. Furthermore, identifying $p$-adic \'etale Tate twists with the syntomic cohomology defined by Bhatt-Morrow-Scholze, the result generalises the Gersten conjecture for logarithmic deRham-Witt sheaves due to Gros-Suwa to arbitrary characteristic. In the second part of the article, we revisit the cycle class map from thickened zero-cycles on the special fiber of $X$ to \'etale cohomology with coefficients in $p$-adic \'etale Tate twists previously studied in \cite{Lueders2019}. This cycle class map is important in the study of zero-cycles on smooth projective varieties over local fields and the approach to the cycle class map which we use in this article is more conceptual and, in contrast to the approach in \textit{loc. cit.}, works for arbitrary finite residue fields.
\end{abstract}


\section{Introduction}
Let $k$ be a field and $X$ a smooth $k$-scheme of dimension $d$. Then Quillen \cite{Quillen1973} showed that the Gersten complex
$$0\rightarrow \mathcal{K}_{q,X}\xrightarrow{} \bigoplus_{x\in X^{(0)}}i_{x*} K_q(x)\rightarrow \bigoplus_{x\in X^{(1)}} i_{x*}K_{q-1}(x)\rightarrow...\to \bigoplus_{x\in X^{(d)}}i_{x*}K_{q-d}(x)\to 0 $$
for algebraic $K$-theory is exact in the Zariski topology, thereby verifying Gersten's conjecture for algebraic $K$-theory for smooth schemes over a field. This has had, amoong others, many applications to algebraic cycles because of the Bloch-Quillen formula $H^q(X,\mathcal{K}_{q,X})\cong \CH^q(X)$ and has motivated the study of the Gersten conjecture for other theories. For example, the Gersten conjecture may be formulated for any cohomology theory with supports $H^{*}(-): \mathrm{Sch}_S\to \rm Ab$ and base scheme $S$.\footnote{A cohomology theory with supports is a contravariant functor, which to any triple $Z\subset Y\subset X$ with $Z,Y$ closed in $X$ associates a long exact sequence
$$\dots \to H^*_Z(X,\cal T_r(n))\to H^*_Y(X,\cal T_r(n))\to H^*_{Y-Z}(X-Z,\cal T_r(n))\to H^{*+1}_Z(X,\cal T_r(n))\to \dots$$
These sequences give rise to the coniveau spectral sequence, whose rows are the Cousin complexes (see \cite[Sec. 5]{CHK97} or Section \ref{section_general_reductions}).}
\begin{conjecture}(Gersten) Let $\mathcal{H}^{q}_{\mathrm{Zar},X}$ be the Zariski sheafification of the presheaf $U\mapsto H^q(U)$. If $X$ is a regular scheme, then the Cousin complex
$$\bigoplus_{x\in X^{(0)}}i_{x*} H^{q}_x(X)\rightarrow \bigoplus_{x\in X^{(1)}} i_{x*}H^{q+1}_x(X)\rightarrow...\to \bigoplus_{x\in X^{(d)}}i_{x*}H^{q+d}_x(X)\to 0 $$
is a resolution of $\mathcal{H}^{q}_{\mathrm{Zar},X}$ in the Zariski topology.
\end{conjecture}
Examples over a field include $H^{q}(-)=H^{q}(-,\mu_\ell^n)$ with $(\ell,ch(k))=1$ and $H^{q}(-)=H^{q}(-,W_r\Omega^n_{X,\log})$ in which case $\mathcal{H}^{q}_{\mathrm{Zar},X}$ is isomorphic to $R^q\epsilon_* \mu_\ell^n$ and $R^q\epsilon_*W_r\Omega^n_{\log}$ respectively. Here $\epsilon:X_{\et}\to X_{\rm Zar}$ is the natural morphism of sites. In these cases, the cohomology of the respective Cousin complexes are the $E_2$-terms of the Leray spectral sequence associated to $\epsilon$. The main ingredient of Quillen's proof is a noether normalisation lemma to reduce the question to $\bb A^d_k$ and this idea is also used by Bloch-Ogus \cite{BO74} to show that the  Gersten conjecture holds for smooth schemes over a field for \'etale cohomology with coefficients prime to the characteristic of $k$, i.e. $H^{q}(-)=H^{q}(-,\mu_\ell^n)$ with $(\ell,ch(k))=1$. In the approach of Bloch-Ogus, however, difficult properties like Poincar\'e duality are required. Subsequently, Gabber developed a geometric presentation lemma and observed that this lemma and the $\bb P^1$-bundle formula suffice to prove the Gersten conjecture. This was most notably used by Gros-Suwa \cite{GrosSuwa1988} to show that the Gersten conjecture holds for $H^{q}(-)=H^{q}(-,W_n\Omega^n_{\log})$. The logarithmic deRham-Witt sheaves $W_n\Omega^n_{\log}$ are particularly important since they can be used to define $p$-adic \'etale motivic cohomology in characteristic $p$. Gabber's approach was axiomatised by Colliot-Th\'el\`ene-Hoobler-Kahn \cite{CHK97}. 
More recently, in the context of their motivic cohomology of equicharacteristic schemes, Elmanto-Morrow have shown that any presheaf on smooth $k$-schemes which satisfies the $\bb P^1$-bundle formula is deflatable and that any such Nisnevich presheaf which is deflatable satisfies the Gersten injectivity \cite[Lem. 6.11]{ElmantoMorrow} (see also Remark \ref{remark_Elmanto_Morrow}).

In this article, we focus on the mixed characteristic situation: let $A$ be a discrete valuation ring with local parameter $\pi$, perfect residue field $k$ of characteristic $p>0$ and fraction field $K$ of characteristic zero. Let $S=\Spec(A)$ and $X$ be a smooth $S$-scheme of dimension $d$. In this situation, the Gersten conjecture for algebraic K-theory with finite coefficients has been proved by Gillet-Levine \cite{GiLe87} and Geisser-Levine \cite{GeisserLevine2000} and for motivic cohomology with finite coefficients by Geisser \cite{Geisser2004}. The case of \'etale cohomology with torsion coefficients prime to $ch(k)$ has been studied by Schmidt-Strunk \cite{SS19} and the author \cite{Lu2021}. All of these cases have in common, that the theories are $\bb A^1$-invariant, at least for regular schemes. 
Our focus in this article will be on the Gersten conjecture for $p$-adic \'etale Tate twists which we denote by $\cal T_r(n)\in D^b(X_{\et},\bb Z/p^r)$. $p$-adic \'etale Tate twists were first defined by Schneider in the smooth case and then by Sato in the semi-stable case \cite{Sa07} and are generally considered to be the right $p$-adic \'etale motivic cohomology with mod $p^r$-coefficients in mixed characteristic for smooth and semi-stable schemes.
The first main result of this article is the following:
\begin{theorem}\label{theorem_intro}(Thm. \ref{theorem_main_in_text})
Assume that $A$ is henselian and let $\epsilon:X_{\et}\to X_{\rm Nis}$ be the natural morphism of sites. Then for all $n\geq 0$ the complex of sheaves
$$0\to R^q\epsilon_*\cal T_r(n) \to \bigoplus_{x\in X^{(0)}}i_{*,x}H^{q}_x(X,\cal T_r(n))\to \bigoplus_{x\in X^{(1)}}i_{*,x}H^{q+1}_x(X,\cal T_r(n))\to $$
$$\dots\to \bigoplus_{x\in X^{(d)}}i_{*,x}H^{q+d}_x(X,\cal T_r(n))\to 0 $$
is exact in the Nisnevich topology.
\end{theorem}
The cohomology of $p$-adic \'etale Tate twists, just like the cohomology of logarithmic deRham-Witt sheaves, is not $\bb A^1$-invariant but satisfies the projective bundle formula (see Section \ref{section_proj_bun_formula}) and our key observation, going back to Gabber's mentioned above, is that the $\bb P^1$-bundle formula in combination with a geometric presentation lemma due to Schmidt-Strunk suffices to prove Theorem \ref{theorem_intro}. The theorem may therefore be considered to be an analogue of the main theorem of Gros-Suwa on the Gersten conjecture for logarithmic deRham-Witt sheaves.
Furthermore, both theories have found a common generalisation in the syntomic complexes $\bb Z/p^r(n)_X$ first defined by Bhatt-Morrow-Scholze \cite{BhattMorrowScholze2019} for quasi-syntomic rings and whose definition has been extended to arbitrary schemes by Bhatt-Lurie \cite{bhatt2022absolute}: for any regular $\bb F_p$-scheme $X$ there exist isomorphisms $\bb Z/p^r(n)_X\cong W_n\Omega^n_{X,\log}[-n]$ \cite[Sec. 8]{BhattMorrowScholze2019} and $\bb Z/p^r(n)_X\cong \T_r(n)\in D^b(X_{\et},\bb Z/p^r)$ for $X$ as in Theorem \ref{theorem_intro} \cite[Thm. 5.8] {BhattMathew2023}. In Theorem \ref{theorem_intro}, the reader may therefore replace $\T_r(n)$ by $\bb Z/p^r(n)_X$ and $A$ by a perfect field or a henselian dvr with perfect residue field. Due to the importance of syntomic cohomology, we focus on this case, for an axiomatisation of the approach, however, see Remark \ref{remark_Elmanto_Morrow}. Finally, we remark that these observations fit nicely with the emphasis on the projective bundle formula in non $\bb A^1$-invariant motivic cohomology recently developed by Elmanto-Morrow \cite{ElmantoMorrow} and Annala-Hoyois-Iwasa \cite{AHI2025}.


Our second main theorem is the following:
\begin{theorem}(Thm. \ref{theorem_comparison})\label{theorem_2}
Let the notation be as above but let $X/S$ be smooth projective and $d$ be the relative dimension of $X$ over $S$. Assume that $k$ is finite. Let $X_k$ be the special fiber of $X$ (in the following also denoted by $X_1$) and denote the inclusion by $i:X_k\to X$. Let $X_n=X\times_A A/\pi^n$ and let $\hat{\cal K}^M$ denote the improved Milnor K-sheaf defined in \cite{Kerz2010}. 
Then for $q\in\{0,1\}$ there are isomorphisms
$$H^{d-q}_{\rm Nis}(X_k,i^*\hat{\cal K}^M_{d,X}/p^r) \xto{\cong}  \projlim_n H^{d-q}_{\rm Nis}(X_k,\hat{\cal K}^M_{d,X_n}/p^r)\xto{\cong} H^{2d-q}_\et(X_k,i^*\cal T_r(d)). $$
\end{theorem}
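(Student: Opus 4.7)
The plan is to prove the two isomorphisms by separate mechanisms. The second (comparison with \'etale cohomology) rests on the Gersten conjecture of Theorem \ref{theorem_main_in_text}, combined with a Bloch-Kato style identification of $R^d\epsilon_*\cal T_r(d)$ with improved Milnor K-theory modulo $p^r$, followed by a corner collapse of the Leray spectral sequence for $\epsilon$. The first (continuity over the thickenings $X_n$) is a Mittag-Leffler style statement for $p$-adic Milnor K-theory.

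For the second isomorphism I would first establish
\[
R^d\epsilon_*\cal T_r(d) \;\cong\; \hat{\cal K}^M_{d,X}/p^r
\]
as Nisnevich sheaves on $X$. One builds the natural symbol map from improved Milnor K-theory to the Tate-twist side, and checks that it is an isomorphism on the stalk at each point of $X$ by using Theorem \ref{theorem_main_in_text} to reduce to the fields and henselian local rings that occur in the Gersten complex, and then applying Bloch-Kato (Voevodsky-Rost) on the generic fiber together with Bloch-Kato-Gabber and Geisser-Levine for the characteristic-$p$ part on the special fiber. Pulling back along $i$ then gives $i^* R^d\epsilon_*\cal T_r(d) \cong i^*\hat{\cal K}^M_{d,X}/p^r$ on $X_k$. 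I would next run the Leray spectral sequence
\[
E_2^{p,s} = H^p_{\rm Nis}(X_k,\, R^s\epsilon_*(i^*\cal T_r(d))) \;\Longrightarrow\; H^{p+s}_\et(X_k,\, i^*\cal T_r(d)).
\]
A base change argument for the closed immersion $i$ identifies $R^s\epsilon_*(i^*\cal T_r(d))$ with $i^* R^s\epsilon_*\cal T_r(d)$, and the boundedness of $\cal T_r(d)$, combined with Theorem \ref{theorem_main_in_text}, forces $R^s\epsilon_*(i^*\cal T_r(d)) = 0$ for $s > d$. Since $X_k$ has Krull dimension $d$, also $H^p_{\rm Nis}(X_k, -) = 0$ for $p > d$. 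The only $E_2$-entry contributing in total degree $2d$ is then the corner $(d,d)$, which yields the $q = 0$ case. For $q = 1$ the potential contributions are at $(d-1, d)$ and $(d, d-1)$. The former survives for dimension reasons. For the latter, I would analyze the Gersten resolution of $R^{d-1}\epsilon_*\cal T_r(d)$ provided by Theorem \ref{theorem_main_in_text} after restricting to $X_k$, and identify the relevant $d_2$-differential with a residue/Bockstein map whose cokernel vanishes, killing $E_\infty^{d, d-1}$. Combined with the sheaf identification of Step 1, this yields the second isomorphism.

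For the first isomorphism, let $\cal F_n := \hat{\cal K}^M_{d,X_n}/p^r$, viewed as a Nisnevich sheaf on $X_k$ via the identification of the underlying (\'etale/Nisnevich) sites of $X_n$ with those of $X_k$. The transition maps $\cal F_{n+1} \twoheadrightarrow \cal F_n$ are surjective, and there is a natural map of sheaves $i^*\hat{\cal K}^M_{d,X}/p^r \to \projlim_n \cal F_n$. I would verify that this is an isomorphism on stalks by invoking continuity of $p$-adic improved Milnor K-theory on henselian local $A$-algebras in the spirit of Gabber, Kato, and Kerz: for such a ring $R$ with uniformizer $\pi$,
\[
\hat K^M_d(R)/p^r \;\isoto\; \projlim_n \hat K^M_d(R/\pi^n)/p^r.
\]
A Mittag-Leffler check for the system $\{H^{d-q-1}_{\rm Nis}(X_k, \cal F_n)\}_n$, using surjectivity of the transition maps together with the finite-dimensionality of Nisnevich cohomology on $X_k$, then upgrades this to
\[
H^{d-q}_{\rm Nis}(X_k,\, i^*\hat{\cal K}^M_{d,X}/p^r) \;\isoto\; \projlim_n H^{d-q}_{\rm Nis}(X_k, \cal F_n),
\]
which is the first isomorphism.

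The principal difficulty is the $q = 1$ case of the second isomorphism. For $q = 0$ the Leray spectral sequence collapses to the corner by a bare dimension count. For $q = 1$ one must show in addition that $E_\infty^{d, d-1} = 0$, which is not automatic: it requires pinning down the $d_2$-differential coming from the Gersten resolution of the column $R^{d-1}\epsilon_*\cal T_r(d)$ and recognizing it via the residue description of boundary maps in the Gersten complex. This is the step where the full strength of Theorem \ref{theorem_main_in_text} is essential.
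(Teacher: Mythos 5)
Your argument for the second isomorphism has a genuine gap. You assert that ``the boundedness of $\cal T_r(d)$, combined with Theorem \ref{theorem_main_in_text}, forces $R^s\epsilon_*(i^*\cal T_r(d)) = 0$ for $s > d$,'' and you then read off the $q=0$ case from the corner $(d,d)$ alone. But this vanishing is false. Theorem \ref{theorem_top_cohomology} shows that $R\epsilon_* i^*\T_r(d)$ is concentrated in $[0,d+1]$, not $[0,d]$, and that the top sheaf is $R^{d+1}\epsilon_*(i^*\cal T_r(d)) \cong R^1\epsilon_*W_r\Omega^d_{\log}$, which is nonzero in general. Consequently there is an entire extra row $s = d+1$ in the Leray spectral sequence. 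For total degree $2d$ the entry $(d-1,d+1)$ contributes and there is a potential $d_2$ into $(d,d)$ from $(d-2,d+1)$; for total degree $2d-1$ the entry $(d-2,d+1)$ also enters. None of these are visible in your dimension count, so your conclusion ``collapses to the corner by a bare dimension count'' does not follow even for $q=0$.

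The paper handles the extra row by invoking the Kato conjectures (proved by Jannsen--Saito in the relevant range): over the finite field $k$, $H^{q}_{\rm Nis}(X_k, R^1\epsilon_*W_r\Omega^d_{\log}) = 0$ for $q < d$ and is $\bb Z/p$ for $q = d$. This is the essential arithmetic input of the theorem and it is entirely absent from your proposal. Without it the spectral sequence does not degenerate in the way you need. Relatedly, your suggested fix for $E_\infty^{d,d-1}$ via a ``residue/Bockstein'' identification of $d_2$ is not substantiated; in the paper the coker/ker issues at $(d,d)$, $(d-1,d)$, $(d,d-1)$ and the $s=d+1$ column are all controlled by the Kato vanishing. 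Finally, for the first isomorphism your Mittag--Leffler route can be made to work but is more roundabout than necessary: the paper observes that the kernel of $i^*\hat{\cal K}^M_{d,X} \to \hat{\cal K}^M_{d,X_n}$ is generated by symbols $\{1+\pi^n a, f_1,\dots,f_{d-1}\}$ which die modulo $p^r$ for $n \gg 0$ by Hensel's lemma \cite[II, Lem.\ 2]{Elkik1973}, so the system of sheaves actually \emph{stabilizes} and no $\projlim^1$ argument is needed.
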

This was proved for $q=0$ and $A=W(k)$ and assuming that the $k$ is sufficiently large in \cite[Prop. 1.4]{Lueders2019} and the proof used the difficult techniques of \cite[Sec. 7]{BEK14} which only work if $ch(k)>d$. The proof we give in this article is more conceptual and works for arbitrary finite residue fields. The key ingredients are the calculation of the top cohomology of $\cal T_r(n)$, which also enters in the proof of Theorem \ref{theorem_intro}, the Kato conjectures for logarithmic deRham-Witt sheaves and the Gersten conjecture for $\hat{\cal K}^M_{d,X,\rm Nis}/p^r$ which was recently proved by Morrow and the author.
The importance of Theorem \ref{theorem_2} comes from the commutative diagram  
$$
\xymatrix{
 \CH^d(X_K)/p^r & \CH^d(X)/p^r\ar[r]^-{res^d_{/p^r}} \ar[d]_{cl_X} \ar@{->>}[l] & \projlim_n H^{d}_{\rm Nis}(X_k,\K^M_{d,X_n}/p^r) \ar[d]^{cl_{X_k}}  \\
& H^{2d}_\et(X, \cal T_r(d)) \ar[r]^-\cong & H^{2d}_\et(X_k,i^*\cal T_r(d)),   \\
}
$$
$K=\Frac(A)$, which is used in \cite{Lueders2019} and \cite{SS14} to study the Chow group of zero cycles mod $p^r$ of smooth projective schemes over local fields. Furthermore, $cl_{X_k}$ can be considered to be a thickened analogue of the isomorphism $\CH^d(X_k)/p^r\xto{\cong} H^{2d}(X,W_r\Omega^d_{r,\log}[-d])\cong \pi_1^{\rm ab}(X_k)/p^r$ which is studied in higher-dimensional unramified class field theory (mod $p^r$) over finite fields. 

\paragraph{Acknowledgement.} I would like to thank Tess Bouis for helpful comments and acknowledge support by the Deutsche Forschungsgemeinschaft (DFG, German Research Foundation) through the Collaborative Research Centre TRR 326 \textit{Geometry and Arithmetic of Uniformized Structures}, project number 444845124. Furthermore, I would like to thank an anonymous referee for a very detailed and helpful report.

\section{$p$-adic \'etale Tate twists}\label{section_etaletatetwists}
Let $A$ be a discrete valuation ring with perfect residue field of characteristic $p>0$ and fraction field $K$ of characteristic zero. Let $X$ be a regular scheme which is flat of finite type over $S=\Spec A$ and which is a smooth or semistable family over $S$. Let $Y$ be the special fiber and $X_K$ the generic fiber with inclusions
$$X_K\xto{j} X \xleftarrow{i} Y.$$
In \cite{Sa07}, and for all $n\geq 0$ and $r\geq 1$, Sato constructs a pair $(\T_r(n)_X,t')$, with $\T_r(n)_X\in D^b(X_{\et},\bb Z/p^r\bb Z)$ unique up to unique isomorphism, fitting into a distinguished triangle of the form
$$i_*\nu^{n-1}_{Y,r}[-n-1]\xto{} \T_r(n)_X\xto{t'}\tau_{\leq n}Rj_*\mu_{p^r}^{\otimes n}\xto{}i_*\nu^{n-1}_{Y,r}[-n].$$
The complexes $\T_r(n)_X$ are called \textit{$p$-adic \'etale Tate twists}.
If $X$ is smooth over $S$, then $\nu^{n-1}_{Y,r}=W_r\Omega^{n-1}_{Y,\log}$ and $\nu^{<0}_{Y,r}=0$. In the smooth case the objects $\T_r(n)_X$ were first defined by Schneider \cite{Schneider1994} and extensively studied by Geisser \cite{Geisser2004}.
Following \cite{Sa07}, we list the properties of $p$-adic \'etale Tate twists which we need for the Gersten conjecture.

\begin{enumerate}
\item  
There is an isomorphism
       $t : j^*\T_r(n)_X \simeq \mu_{p^r}^{\otimes n}$ (see \cite[Def. 4.2.4]{Sa07}).
\item 
$\T_r(n)_X$ is concentrated in $[0,n]$, i.e.,
   the $q$-th cohomology sheaf is zero unless $0 \leq q \leq n$ (see \cite[Def. 4.2.4]{Sa07}).
\item 
For a locally closed regular subscheme $i : Z \hookrightarrow X$ of codimension
          $c \,(\ge 1)$ which is contained in the special fiber $Y$, there is a Gysin isomorphism
$W_r\Omega_{Z,\log}^{n-c}[-n-c] \xto{\cong}\tau_{\leq n+c} R i^!\T_r(n)_X$ in $D^b(Z_{\et},\bb Z/p^r\bb Z)$ (see \cite[Thm. 4.4.7]{Sa07}).
\item 
There is a unique morphism
$
\T_r(m)_X \otimes^{\bb L} \T_r(n)_X
      \to \T_r(m+n)_X$ in $D^b(X_{\et},\bb Z/p^r\bb Z)$
that extends the natural isomorphism
         $\mu_{p^r}^{\otimes m} \otimes \mu_{p^r}^{\otimes n}
           \simeq \mu_{p^r}^{\otimes m+n}$ on $X[1/p]$ (see \cite[Prop. 4.2.6]{Sa07}).
\item Let $X'$ be another scheme which is flat of finite type over $S$
    and for which the objects $\T_r(n)_{X'}$ are defined.
Let $f : X' \to X$ be a morphism of schemes and let
    $\psi : X'[1/p] \to X[1/p]$ be the induced morphism. Then there is a unique morphism
$$
\begin{CD}
f^*\T_r(n)_X
       @>>> \T_r(n)_{X'} 
\end{CD}
$$
in $D^b(X'_{\et},\bb Z/p^r\bb Z)$ that extends the natural isomorphism
    $\psi^*\mu_{p^r}^{\otimes n} \simeq \mu_{p^r}^{\otimes n}$
     on $X'[1/p]$ (see \cite[Prop. 4.2.8]{Sa07}). We denote the induced map on cohomology by $$f^*:H^q(X,\T_r(n)_X)\to H^q(X',f^*\T_r(n)_X)\to H^q(X',\T_r(n)_{X'}).$$

Assume now that $f$ is proper, and put
    $c:=\dim(X)-\dim(X')$. Then by \cite[Thm. 7.1.1]{Sa07} there is a unique morphism
$$
\begin{CD}
   Rf_*\T_r(n-c)_{X'}[-2c]
       @>>> \T_r(n)_X
\end{CD}
$$
in $D^b(X_{\et},\bb Z/p^r\bb Z)$ that extends the trace morphism
   $R\psi_*\mu_{p^r}^{\otimes n-c}[-2c] \to \mu_{p^r}^{\otimes n}$
     on $X[1/p]$. The latter trace morphism (on the generic fiber) was first defined by Deligne in \cite[XVIII, Thm. 2.9]{SGA4} under some additional assumptions. Using Gabber's absolute purity theorem \cite{Fu02}, Deligne's trace can be extended to proper morphisms.
   \end{enumerate}  

\begin{proposition}\label{proposition_purity_tate_twists} Let $X$ be as above and $i : Z \hookrightarrow X$ a regular closed subscheme of codimension
          $c \,(\ge 1)$ which is contained in the special fiber $Y$. Then there are isomorphisms
$$H^{n+c}_Z(X,\T_r(n))\cong H^{n-c}(Z,W_r\Omega^{n-c}_{Z,\log}[-(n-c)])$$
for all $n\geq 0$ (by definition, $W_r\Omega^{q}_{Z,\log}$ is the zero sheaf for $q<0$).
\end{proposition}
\begin{proof}
Consider the Grothendieck spectral sequence
$$E_2^{u,v}=H^{u}(Z,R^vi^!\T_r(n))\Longrightarrow R^{v+u}(\Gamma_Z\circ i^!)\T_r(n)=:H^{v+u}_Z(X,\T_r(n)).$$
By purity (property (iii) above), this spectral sequence looks as follows:
$$\xymatrix{
n+c+1  & H^{0}(Z,R^1i^!\T_r(n)) & \dots & & \\
n+c  & H^0(Z,W_r\Omega^{n-c}_{Z,\log}) \ar[drr] & H^1(Z,W_r\Omega^{n-c}_{Z,\log}) & \dots  &\\
n+c-1  & 0 & 0 & 0 & \dots
}$$
This implies that $H^0(Z,W_r\Omega^{n-c}_{Z,\log})\cong H^{n+c}_Z(X,\T_r(n))$. 

\end{proof}

\subsection{The projective bundle formula}\label{section_proj_bun_formula}
For the projective bundle formula we need to extend the definition of $p$-adic \'etale Tate twists to negative weights.
\begin{definition}(\cite[Def. 3.5(i)]{Sa13})
Let the notation be as above. For $n<0$ and $r\geq 1$, we define $\T_r(n):=j_!\mathcal{H}om_{X_K} (\mu_{p^r}^{\otimes -n},\bb Z/p^r)[0]$ placed in degree zero in $D^b(X_{\et},\bb Z/p^r)$.
\end{definition}
Let $X$ be as in the beginning of the section and $E$ be a vector bundle of rank $a+1$ on $X$ and $f:\bb P(E)\to X$ be the associated projective bundle. Let $\roi(1)_E$ be the tautological invertible sheaf on $\bb P(E)$. By \cite[4.5.1]{Sa07} there is a distinguished triangle
$$\cal T_r(1)_{\bb P(E)}\to \roi^\times_{\bb P(E)}[0]\xto{\times p^r} \roi^\times_{\bb P(E)}[0]\to \cal T_r(1)_{\bb P(E)}[1]$$
in $D^b(\bb P(E)_{\et})$ and we denote the image of $\roi(1)_E$ under the induced map
$$H^1(\bb P(E),\roi^\times)\to H^2(\bb P(E),\cal T_r(1)_{\bb P(E)})$$
by $\xi$.
The composition
$$\cal T_r(n-q)_{X}[-2q]\xto{f^*} Rf_*\cal T_r(n-q)_{\bb P(E)}[-2q]\xto{-\cup \xi^q}Rf_*\cal T_r(n)_{\bb P(E)},$$
$0\leq q \leq a$, where $f^*$ is the adjoint of the morphism defined in (v), induces a canonical morphism
$$\gamma_E:\bigoplus_{q=0}^a \cal T_r(n-q)_{X}[-2q]\to Rf_*\cal T_r(n)_{\bb P(E)}$$
in $D^b(X_{\et},\bb Z/p^r)$.
\begin{proposition}\label{proposition_projective_bundle_formula} \cite[Thm. 4.1]{Sa13} \quad $\gamma_E$ is an isomorphism for all $n\in \bb Z$.
\end{proposition}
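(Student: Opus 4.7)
The plan is to reduce the statement, via the defining distinguished triangle of $\cal T_r(n)_X$, to the classical projective bundle formulas for the \'etale sheaves $\mu_{p^r}^{\otimes n}$ on the generic fibre and for $W_r\Omega^{n-1}_{Y,\log}$ on the special fibre.

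First, since both source and target of $\gamma_E$ are compatible with \'etale base change on $X$, and being an isomorphism in $D^b(X_\et,\bb Z/p^r)$ is an \'etale-local condition, one may trivialise $E$ and reduce to the case $\bb P(E) = \bb P^a_X$. Next, the localisation setup $X_K \xto{j} X \xleftarrow{i} Y$ yields the standard criterion that an object $K \in D^b(X_\et,\bb Z/p^r)$ vanishes iff both $j^*K$ and $i^*K$ vanish (via the triangle $i_*Ri^!K \to K \to Rj_*j^*K$ and the identity $i^*i_* \simeq \op{id}$). Applied to the cone of $\gamma_E$, it suffices to prove that both $j^*\gamma_E$ and $i^*\gamma_E$ are isomorphisms. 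On the generic fibre, property~(i) identifies $j^*\cal T_r(m)$ with $\mu_{p^r}^{\otimes m}$, and $j^*\xi$ is the standard \'etale Chern class of $\roi(1)$ in $H^2(\bb P^a_{X_K},\mu_{p^r})$; hence $j^*\gamma_E$ becomes the classical projective bundle formula with $\mu_{p^r}^{\otimes n}$ coefficients, a theorem from SGA~4.

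For the special fibre, I would apply $i^*Rf_*$ to the defining triangle of $\cal T_r(n)_{\bb P^a_X}$
$$i_*\nu^{n-1}_{\bb P^a_Y,r}[-n-1]\to \cal T_r(n)_{\bb P^a_X}\to \tau_{\leq n}Rj_*\mu_{p^r}^{\otimes n}\to $$
using smooth-proper base change to commute $i^*$ with $Rf_*$, and compare the resulting triangle to the direct sum over $q$ of the analogous triangles for $\cal T_r(n-q)_X[-2q]$ via the components of $\gamma_E$. A five-lemma argument in the triangulated category reduces the claim to two auxiliary projective bundle formulas: one for $W_r\Omega^{n-1}_{Y,\log}$ on $\bb P^a_Y \to Y$, due to Gros, and one for $\tau_{\leq n}Rj_*\mu_{p^r}^{\otimes n}$, which follows from the classical formula on $\bb P^a_{X_K}$ together with the compatibility of the truncation $\tau_{\leq n}$ with cup product by $\xi^q$.

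The main obstacle, I expect, is verifying that the Chern class $\xi \in H^2(\bb P^a_X,\cal T_r(1))$ restricts compatibly to the standard Chern classes on both fibres, so that the morphisms of distinguished triangles used above commute strictly rather than merely up to sign. This compatibility, needed to apply the five-lemma, should follow from the naturality of Sato's pullback formalism (properties (iv)--(v)) and from the definition of $\xi$ via the Kummer-type triangle $\cal T_r(1) \to \roi^\times \xto{\times p^r} \roi^\times$, combined with the parallel Kummer descriptions on $\bb P^a_{X_K}$ and $\bb P^a_Y$. Once this compatibility is pinned down, the five-lemma completes the argument.
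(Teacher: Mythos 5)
The paper does not prove this proposition: it is quoted from Sato \cite[Thm.\ 4.1]{Sa13}, so there is no in-text argument against which to compare. I will therefore assess your sketch on its own terms.

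The overall strategy of checking $j^*\gamma_E$ and $i^*\gamma_E$ separately is sound, and the generic-fibre part is fine (proper base change commutes $j^*$ past $Rf_*$, property~(i) converts everything to $\mu_{p^r}^{\otimes \bullet}$, and one is left with the classical \'etale projective bundle formula). The log de Rham--Witt piece on the special fibre is also fine: $Rf_*i'_*\nu^{n-1}[-n-1]=i_*Rf''_*\nu^{n-1}[-n-1]$, and Gros's formula $Rf''_*W_r\Omega^{n-1}_{\log}\cong\bigoplus_q W_r\Omega^{n-1-q}_{\log}[-q]$ gives exactly $\bigoplus_q i_*\nu^{n-q-1}[-n-q-1]$, matching the source.

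The gap is in the third slot of your comparison of defining triangles. You need
$$Rf_*\bigl(\tau_{\leq n}Rj'_*\mu_{p^r}^{\otimes n}\bigr)\;\cong\;\bigoplus_{q=0}^{a}\bigl(\tau_{\leq n-q}Rj_*\mu_{p^r}^{\otimes n-q}\bigr)[-2q],$$
and you attribute this to ``the classical formula plus compatibility of truncation with cup product by $\xi^q$.'' But $Rf_*$ does not commute with $\tau_{\leq n}$, and the truncation parameters on the two sides are different: a projection-formula computation on the untruncated $Rj'_*\mu^n$ gives $\bigoplus_q Rj_*\mu^{n-q}[-2q]$, but applying $\tau_{\leq n}$ before $Rf_*$ does not land you at $\tau_{\leq n-q}$ in the $q$-th summand (it would at best give $\tau_{\leq n}$ uniformly, which is a genuinely different complex because $i^*R^mj_*\mu^{n-q}$ does not vanish for $n-q<m\leq n$). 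In fact, once the $\nu$-terms match, the 2-out-of-3 property of triangles makes the displayed identity \emph{equivalent} to $i^*\gamma_E$ being an isomorphism, so invoking it is circular unless you prove it independently. Closing this requires a real argument about the structure of the nearby-cycles sheaves $i^*R^mj_*\mu^{n}$ in degrees $m>n-q$ under pushforward along $\bb P(E)_Y\to Y$, or else a different route (e.g.\ a direct induction via Gysin sequences for hyperplane sections, which is closer to what Sato actually does). As written, the special-fibre step is not yet a proof.
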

For a more general statement concerning syntomic cohomology see also \cite[Thm. 9.1.1]{bhatt2022absolute}. As mentioned in the introduction, the syntomic complexes of \textit{loc. cit.} coincide with Sato's $p$-adic \'etale Tate twists for schemes for which the latter are defined by \cite[Thm. 5.8]{BhattMathew2023}.

\section{General reductions for the Gersten conjecture}\label{section_general_reductions}
Let $A$ be a discrete valuation ring with local parameter $\pi$, perfect residue field $k$ of characteristic $p>0$ and fraction field $K$ of characteristic zero and $S=\Spec(A)$. 
Let $\cal S_S$ be the category of regular semistable schemes which are flat of finite type over $S=\Spec A$. Let  $\cal P_S$ be the category of pairs $(X,Z)$, where $X\in \cal S_S$ and $Z$ is a closed subset of $X$. A morphism $f:(X',Z')\to (X,Z)$ in $\cal P_S$ consists of a morphism $f:X'\to X$ in $\cal S_S$ such that $f^{-1}(Z)\subset Z'$.
We define a cohomology theory with supports by the contravariant functor
$$(X,Z)\mapsto H^*_Z(X,\cal T_r(n))$$
from $\cal P_S$ to the category of abelian groups.
For any triple $Z\subset Y\subset X$ with $Z,Y$ closed in $X$ there is a long exact sequence
$$\dots \to H^*_Z(X,\cal T_r(n))\to H^*_Y(X,\cal T_r(n))\to H^*_{Y-Z}(X-Z,\cal T_r(n))\to H^{*+1}_Z(X,\cal T_r(n))\to \dots$$
(\cite[Sec. 1.9]{Sa07}).
Furthermore, we have excision: let $Z\subset X$ and $Z'\subset X'$ be closed subschemes and $\pi:X'\to X$ be an \'etale morphism such that the restriction of $\pi$ to $Z'$ is an isomorphism and $\pi(X'-Z')\subset X-Z$, then 
$$H^*_Z(X,\cal T_r(n))\xto{\cong}H^*_{Z'}(X',\pi^*\cal T_r(n)).$$
In the following let $\tau\in \{\rm Zar,\rm Nis\}$ denote the Zariski or Nisnevich topology. Let $\cal H^*(\cal T_r(n))$ be the sheafification of the presheaf $U\mapsto H^*(U,\cal T_r(n))$ for the $\tau$-topology on $X$.
\begin{definition}
\renewcommand{\labelenumi}{(\arabic{enumi})}
\begin{enumerate}
\item Let $X\in \cal S_S$. We let
$$Z^p(X):=\{Z\subset X\; |\;Z \;\mathrm{ closed},\; \mathrm{codim}_X(Z)\geq p\}$$
for all $p\in \bb N$ and let $Z^p(X)=\emptyset$ for $q<0$. If $X$ is clear from the context, then we also write $Z^p$ instead of $Z^p(X)$. Let $Z^p/Z^{p+1}$ denote the ordered set of pairs $(Z, Z')\in Z^p\times Z^{p+1}$ such that $Z'\subset Z$, with the ordering $(Z,Z')\geq (Z_1,Z_1')$ if $Z\supset Z_1$ and $Z'\supset Z_1'$. The group $Z^p(X)$ should not be confused with the group of algebraic cycle, i.e. the free abelian group generated by integral closed subschemes of codimension $p$ of $X$, which is the standard notation. Since we do not use the latter group elsewhere in the article, we stick to the notation used by Bloch-Ogus in \cite[Sec. 3]{BO74}.
\item For the $\tau$-topology on $X$, we define $\cal H^q_{Z^p}(\cal T_r(n))$ to be the sheaf associated to the presheaf
$$U\mapsto \varinjlim_{Z\in Z^p(U)} H^q_{Z}(U,\cal T_r(n))$$
and $\cal H^q_{Z^p/Z^{p+1}}(\cal T_r(n))$ to be the sheaf associated to the presheaf
$$U\mapsto \varinjlim_{(Z,Z')\in Z^p(U)/Z^{p+1}(U)} H^q_{Z-Z'}(U-Z',\cal T_r(n)).$$
\item Let
$$H^q_x(X,\cal T_r(n)):=\varinjlim_{U\ni x} H^q_{\overline{\{x\}}\cap U}(U,\cal T_r(n)),$$
where the colimit is taken over all open subschemes 
$U$ in $X$ containing $x$.
\end{enumerate}
\end{definition}
\begin{proposition}\label{proposition_cousin_complex}
Let $\epsilon:X_{\et}\to X_{\tau}$ be the natural morphism of sites. If the natural map $$(*)\colon\cal H^q_{Z^{p+1}}(\cal T_r(n))\to \cal H^q_{Z^p}(\cal T_r(n))$$ is zero (in the $\tau$-topology) for all $p,q,n\in \mathbb{N}$, then there exists an exact sequence of $\tau$-sheaves   
$$
0\to R^q\epsilon_*\cal T_r(n) \to \bigoplus_{x\in X^{(0)}}i_{*,x}H^{q}_x(X,\cal T_r(n))\to \bigoplus_{x\in X^{(1)}}i_{*,x}H^{q+1}_x(X,\cal T_r(n))\to $$
 $$\dots\to \bigoplus_{x\in X^{(d)}}i_{*,x}H^{q+d}_x(X,\cal T_r(n))\to 0.  
$$ 
\end{proposition}
\begin{proof}
Assuming $(*)=0$ for all $p,q,n\in \mathbb{N}$, the exact sequences 
$$\dots\to \cal H^{q-1}_{Z^p/Z^{p+1}}(\cal T_r(n))\to\cal H^q_{Z^{p+1}}(\cal T_r(n))\xto{0} \cal H^q_{Z^p}(\cal T_r(n))\to \cal H^{q}_{Z^p/Z^{p+1}}(\cal T_r(n))\to \dots$$
and
$$\dots\to \cal H^{q-1}_{Z^{p+1}/Z^{p+2}}(\cal T_r(n))\to\cal H^q_{Z^{p+2}}(\cal T_r(n))\xto{0} \cal H^q_{Z^{p+1}}(\cal T_r(n))\to \cal H^{q}_{Z^{p+1}/Z^{p+2}}(\cal T_r(n))\to \dots $$
splice together as follows:
$$\dots\to \cal H^{q-1}_{Z^p/Z^{p+1}}(\cal T_r(n))\twoheadrightarrow\cal H^q_{Z^{p+1}}(\cal T_r(n))\hookrightarrow \cal H^q_{Z^{p+1}/Z^{p+2}}(\cal T_r(n)) \twoheadrightarrow \cal H^q_{Z^{p+2}}(\cal T_r(n))\to\dots\; .$$
Note that the first two non-zero terms in the complex of the proposition coincide with $\cal H^q_{Z^{0}}(\cal T_r(n))$ and $\cal H^q_{Z^{0}/Z^{1}}(\cal T_r(n))$ and that $\cal H^{q-1}_{Z^{-1}/Z^{0}}(\cal T_r(n))=0$. It therefore remains to show that therefore there are isomorphisms 
$$\cal H^{q}_{Z^p/Z^{p+1}}(\cal T_r(n))\cong \bigoplus_{x\in X^{(q)}}i_{*,x}H^{q}_x(X,\cal T_r(n)).$$ 
 This follows from Zariski excision: if $T_1,...,T_r$ are pairwise disjoint closed subsets of $X$, then $$\bigoplus_i H^q_{T_i}(X,\cal T_r(n))\cong H^q_{\cup T_i}(X,\cal T_r(n))$$ (see \cite[Lemma  1.2.1]{CHK97}). 
\end{proof}

\begin{remark} The complexes starting with $\bigoplus_{x\in X^{(0)}}$ in Proposition \ref{proposition_cousin_complex} are called \textit{Cousin complexes} and coincide by the construction in the proof with the rows of the coniveau spectral sequence (see \cite[Sec. 1.1]{CHK97} or \cite[Ch. IV]{Ha66}).
\end{remark} 

The following definition is used in applications to show that $(*)$ is zero.
\begin{definition}
Let $X\in \cal S_S$ and $x\in X$ a point. We say that $\cal T_r(n)$ is effaceable at $x$ if the following condition is satisfied: Given $p\geq 0$, for any affine $\tau$-neighbourhood $W$ of $x$ and any closed subscheme $Z\in Z^{p+1}(W)$ there exists a $\tau$-neighbourhood $f:U\to W$ of $x$ and a closed subscheme $Z'\in Z^p(U)$ containing $f^{-1}(Z)$ and $x$ such that the composition 
$$H^q_{Z}(W,\cal T_r(n))\to H^q_{f^{-1}(Z)}(U,\cal T_r(n))\to H^q_{Z'}(U,\cal T_r(n))$$
is zero for all $q$.
\end{definition}

\section{Proof of the Gersten conjecture}
We fix the following notation. Let $A$ be a discrete valuation ring with local parameter $\pi$, perfect residue field $k$ of characteristic $p>0$ and fraction field $K$ of characteristic zero, and $S=\Spec(A)$. 

In order to prove our main theorem, the Gersten conjecture for $p$-adic \'etale Tate twists, we closely follow Gabber's strategy \cite{Gabber1994} as exposed in \cite{CHK97}. A key ingredient is the following lemma. In equal characteristic, it has appeared in \cite[Key lemma, Sec. 4.1]{CHK97}, \cite[p. 621]{GrosSuwa1988} and, more recently, in \cite[Lem. 6.12]{ElmantoMorrow} (see also Remark \ref{remark_Elmanto_Morrow}). 
\begin{lemma}\label{prop_commutativity}
Let $V$ be a smooth $S$-scheme, $p:\bb A^1_V\to V$ and $\tilde p:\bb P^1_V\to V$ the natural projections. Let $j: \bb A^1_V\to \bb P^1_V$ be the inclusion and $s_\infty:V\to \bb P^1_V$ the section of $\tilde p$ at infinity. Let $F$ be a closed subset of $V$. Then the diagram
$$\xymatrix{
 H^{q}_{\bb A^1_F}(\bb A^1_V,\cal T_r(n))  &  \\
 & H^{q}_F(V ,\cal T_r(n)) \ar[ul]_{p^*} & \\
  H^{q}_{\bb P^1_F}(\bb P^1_V,\cal T_r(n)) \ar[uu]^{j^*}  \ar[ur]^{s_\infty^*} & 
}$$
commutes.
\end{lemma}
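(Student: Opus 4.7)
The plan is to use the projective bundle formula (Proposition \ref{proposition_projective_bundle_formula}) to split $H^q_{\bb P^1_F}(\bb P^1_V, \cal T_r(n))$ into two canonical summands, and then verify the identity $p^* \circ s_\infty^* = j^*$ on each summand separately. The geometric content reduces to the fact that $\cal O_{\bb P^1_V}(1)$ becomes trivial after pullback along $s_\infty$ or along $j$.

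First I would apply $R\Gamma_F$ to the isomorphism $\gamma_E$ of Proposition \ref{proposition_projective_bundle_formula} for the trivial rank-$2$ bundle $E = \cal O_V^{\oplus 2}$. Since $\tilde p^{-1}(F) = \bb P^1_F$, one has a canonical identification $R\Gamma_F(V, R\tilde p_* \cal T_r(n)_{\bb P^1_V}) = R\Gamma_{\bb P^1_F}(\bb P^1_V, \cal T_r(n))$, so taking cohomology yields a functorial decomposition
$$H^q_{\bb P^1_F}(\bb P^1_V, \cal T_r(n)) \;\cong\; H^q_F(V, \cal T_r(n)) \;\oplus\; H^{q-2}_F(V, \cal T_r(n-1)),$$
under which a general element takes the form $\tilde p^* a + (\tilde p^* b) \cup \xi$ with $\xi \in H^2(\bb P^1_V, \cal T_r(1))$ the Chern class of $\cal O(1)$.

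Next I would compute $s_\infty^*$ and $j^*$ on each summand. On the $\tilde p^*$-summand one has $s_\infty^* \tilde p^* = \mathrm{id}$ and $j^* \tilde p^* = p^*$, because $s_\infty$ is a section of $\tilde p$ and $\tilde p \circ j = p$; so $p^* \circ s_\infty^*$ and $j^*$ both act as $p^*$. On the twisted summand, multiplicativity of pullback with respect to the cup product together with functoriality of the defining triangle of $\cal T_r(1)$ reduces the computation to showing that the line bundles $s_\infty^* \cal O(1)$ on $V$ and $j^* \cal O(1)$ on $\bb A^1_V$ are trivial; the first holds because $s_\infty$ and the section $s_0$ of $\tilde p$ are disjoint, and the second because $\cal O(1)|_{\bb A^1_V}$ admits a canonical nowhere-vanishing section. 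Hence $s_\infty^* \xi = 0 = j^* \xi$, and both $j^*$ and $p^* \circ s_\infty^*$ send $\tilde p^* a + (\tilde p^* b) \cup \xi$ to $p^* a$, proving the lemma.

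The main point requiring care is the compatibility of the projective bundle formula with the support condition $\bb P^1_F$ and the behaviour of the cup product with supports under pullback; both are formal consequences of the functoriality of $\gamma_E$ recorded in Section \ref{section_etaletatetwists} together with the standard identities for sections with support, so once these compatibilities are set up carefully the argument should proceed without further obstacle.
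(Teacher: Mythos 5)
Your proof is correct and follows essentially the same route as the paper: decompose $H^{q}_{\bb P^1_F}(\bb P^1_V,\cal T_r(n))$ via the projective bundle formula into $H^{q}_F(V,\cal T_r(n))\oplus H^{q-2}_F(V,\cal T_r(n-1))$, note that both $j^*$ and $p^*\circ s_\infty^*$ act as $p^*$ on the first summand, and kill the second summand because $j^*\roi(1)$ and $s_\infty^*\roi(1)$ are trivial. The paper states this in one sentence; you simply unwind the same argument in more detail.
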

\begin{proof}
By Proposition \ref{proposition_projective_bundle_formula} there is an isomorphism $H^{q}_{\bb P^1_F}(\bb P^1_V,\cal T_r(n))\cong H^{q-2}_F(V ,\cal T_r(n-1))\oplus  H^{q}_F(V ,\cal T_r(n))$. Since the restrictions $j^* \roi(1)$ and $s_\infty^* \roi(1)$ are trivial, this implies that $j^*|_{H^{q-2}_F(V ,\cal T_r(n-1))}=0$ and $s_\infty^*|_{H^{q-2}_F(V ,\cal T_r(n-1))}=0$ and therefore that the diagram commutes.
\end{proof}

\begin{proposition}\label{proposition_A1_effacement}
Let $V$ be a smooth $S$-scheme, $F$ a closed subset of $V$ and $F'$ a closed subset of $\bb A^1_F$ such that the projection $f:F'\to F$ is finite. Diagrammatically:
$$\xymatrix{
 F' \ar@{^{(}->}[r] \ar[rd]_f   & \bb A^1_F \ar[d] \ar@{^{(}->}[r]  & \bb A^1_V \ar[d]^p  \\
 & F \ar@{^{(}->}[r]  & V. 
}$$
Then the natural map 
$$ H^{q}_{F'}(\bb A^1_V,\cal T_r(n))\to H^{q}_{\bb A^1_F}(\bb A^1_V,\cal T_r(n)) $$
is zero.
\end{proposition}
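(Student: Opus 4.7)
The plan is to lift the map through $\mathbb{P}^1_V$, where the projective bundle formula and Lemma~\ref{prop_commutativity} give us control, and exploit that $F'$ is disjoint from the section at infinity.

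First I would observe that since $F \hookrightarrow V$ is a closed immersion and $F' \to F$ is finite, the composition $F' \to V$ is proper. The inclusion $F' \hookrightarrow \mathbb{A}^1_V \xrightarrow{j} \mathbb{P}^1_V$ therefore realizes $F'$ as a closed subset of $\mathbb{P}^1_V$ whose image under $s_\infty$ is empty, i.e.\ $s_\infty^{-1}(F')=\emptyset$. Consequently excision (applied to the open immersion $j$, which is an isomorphism on $F'$) gives an isomorphism
$$j^*\colon H^{q}_{F'}(\mathbb{P}^1_V,\cal T_r(n)) \xrightarrow{\cong} H^{q}_{F'}(\mathbb{A}^1_V,\cal T_r(n)).$$

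Next, by functoriality of support-enlargement and of pullback along $j$, one has a commutative square
$$\xymatrix{
H^{q}_{F'}(\mathbb{P}^1_V,\cal T_r(n)) \ar[r]^{\cong}_{j^*} \ar[d] & H^{q}_{F'}(\mathbb{A}^1_V,\cal T_r(n)) \ar[d] \\
H^{q}_{\mathbb{P}^1_F}(\mathbb{P}^1_V,\cal T_r(n)) \ar[r]^{j^*} & H^{q}_{\mathbb{A}^1_F}(\mathbb{A}^1_V,\cal T_r(n)),
}$$
in which the right vertical arrow is precisely the map we must show is zero. Since the top arrow is an isomorphism, it suffices to show that the composition down then across (through $H^{q}_{\mathbb{P}^1_F}(\mathbb{P}^1_V,\cal T_r(n))$) vanishes.

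For that composition I would invoke Lemma~\ref{prop_commutativity}, which factors the bottom $j^*$ as $p^*\circ s_\infty^*$, i.e.\ through $H^{q}_F(V,\cal T_r(n))$ via the section at infinity. The key point is that $s_\infty^*$ is compatible with support-enlargement, so the composition
$$H^{q}_{F'}(\mathbb{P}^1_V,\cal T_r(n))\longrightarrow H^{q}_{\mathbb{P}^1_F}(\mathbb{P}^1_V,\cal T_r(n))\xrightarrow{s_\infty^*}H^{q}_{F}(V,\cal T_r(n))$$
agrees with the pullback $H^{q}_{F'}(\mathbb{P}^1_V,\cal T_r(n))\to H^{q}_{s_\infty^{-1}(F')}(V,\cal T_r(n))$ followed by support enlargement. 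Since $s_\infty^{-1}(F')=\emptyset$, the source of that enlargement is $0$, so the whole composition is zero, and the proposition follows.

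The only genuine subtlety, and the step I would spend most care on, is the last functoriality claim: that the support-refined pullback $s_\infty^*$ with target $H^{q}_{s_\infty^{-1}(F')}(V,\cal T_r(n))$ is compatible under support-enlargement with the $s_\infty^*$ of Lemma~\ref{prop_commutativity}. This is a formal consequence of the construction of pullbacks with supports for $\cal T_r(n)$ (combining the pullback maps of property~(v) in Section~\ref{section_etaletatetwists} with the excision long exact sequence recalled in Section~\ref{section_general_reductions}), but it is the one place where one has to be honest about what is being asserted; everything else is a straightforward diagram chase.
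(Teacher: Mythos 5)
Your proof is correct and follows essentially the same route as the paper's: excise to $\bb P^1_V$, use Lemma~\ref{prop_commutativity} to factor the map through $s_\infty^*$, and exploit $F'\cap s_\infty(V)=\emptyset$. The only packaging difference lies in the final vanishing step: the paper factors $s_\infty$ through the open complement $\bb P^1_V\setminus F'$ and quotes the localisation exact sequence for the triple $F'\subset\bb P^1_F\subset\bb P^1_V$, whereas you invoke compatibility of $s_\infty^*$ with support-refinement so that the composite factors through $H^q_{s_\infty^{-1}(F')}(V)=H^q_\emptyset(V)=0$; these are equivalent expressions of the same formal content.
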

\begin{proof} The argument is essentially the  same as in the proof of \cite[Thm. 4.2.1]{CHK97}.
Let $k:\bb P^1_V-F'\hookrightarrow \bb P^1_V$ be the open immersion. Since $s_\infty(V)\cap F'=\emptyset$ as $f$ is finite, $s_\infty$ can be factorised into $s_\infty=k\circ s'$.
The statement now follows from the following commutative diagram in which the isomorphism on the left is due to excision, the middle triangle commutes by Lemma \ref{prop_commutativity} and the lower horizontal composition is part of the localisation exact sequence and therefore zero.

$$\xymatrix{
 H^{q}_{F'}(\bb A^1_V,\cal T_r(n)) \ar[r]   & H^{q}_{\bb A^1_F}(\bb A^1_V,\cal T_r(n))  & & & \\
 &  \ar[u] & H^{q}_F(V ,\cal T_r(n)) \ar[ul]_{p^*} & & \\
H^{q}_{F'}(\bb P^1_V,\cal T_r(n))\ar[r] \ar[uu]_\cong  & H^{q}_{\bb P^1_F}(\bb P^1_V,\cal T_r(n)) \ar[uu]^{j^*} \ar[rr]^-{k^*} \ar[ur]^{s_\infty^*} & & H^{q}_{\bb P^1_F-F'}(\bb P^1_V-F' ,\cal T_r(n)). \ar[ul]_{s'^*} 
}$$
\end{proof}

\begin{theorem}\label{theorem_main_in_text}
Assume that $S$ is henselian. Let $X/S$ be a smooth scheme of dimension $d$ and let $\epsilon:X_{\et}\to X_{\rm Nis}$ be the natural morphism of sites.
The complex of sheaves
$$0\to R^q\epsilon_*\cal T_r(n) \to \bigoplus_{x\in X^{(0)}}i_{*,x}H^{q}_x(X,\cal T_r(n))\to \bigoplus_{x\in X^{(1)}}i_{*,x}H^{q+1}_x(X,\cal T_r(n))\to $$
$$\dots\to \bigoplus_{x\in X^{(d)}}i_{*,x}H^{q+d}_x(X,\cal T_r(n))\to 0 $$
is exact in the Nisnevich topology for all $n\geq 0$.
\end{theorem}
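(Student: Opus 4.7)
Following the general reductions in Section~\ref{section_general_reductions}, the plan is to verify that $\mathcal{T}_r(n)$ is effaceable in the Nisnevich topology at every point $x\in X$. Effaceability at every point implies that the transition map $(*)\colon\mathcal{H}^q_{Z^{p+1}}(\mathcal{T}_r(n))\to \mathcal{H}^q_{Z^p}(\mathcal{T}_r(n))$ vanishes stalk-wise, and the splicing argument given just before Definition~3.1 then yields exactness of the Gersten complex.

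To verify effaceability at $x$, fix an affine Nisnevich neighborhood $W$ of $x$ and a closed subscheme $Z\in Z^{p+1}(W)$; after enlarging $Z$ we may assume $x\in Z$. Following Gabber's strategy as adapted by Colliot-Thélène-Hoobler-Kahn, the decisive geometric input is a presentation lemma of Schmidt-Strunk \cite{SS18}, valid in mixed characteristic. Applied at $x$, it should supply a smooth $S$-scheme $V$, a Nisnevich neighborhood $f\colon U\to W$ of $x$, and an étale morphism $\phi\colon U\to \mathbb{A}^1_V$ such that $\phi$ restricts to a closed immersion on $f^{-1}(Z)$ with image $F':=\phi(f^{-1}(Z))$, and such that the composition $F'\hookrightarrow \mathbb{A}^1_V\xto{p}V$ is finite.

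Set $F:=p(F')\subset V$, so that $F'\subset \mathbb{A}^1_F$ and $F'\to F$ is finite. Then $Z':=\phi^{-1}(\mathbb{A}^1_F)\subset U$ is a closed subscheme of codimension $\geq p$ containing $f^{-1}(Z)$ and $x$. By excision for $\phi$ along $f^{-1}(Z)\isoto F'$, there is an isomorphism $H^q_{f^{-1}(Z)}(U,\mathcal{T}_r(n))\cong H^q_{F'}(\mathbb{A}^1_V,\mathcal{T}_r(n))$, and under this identification the map $H^q_{f^{-1}(Z)}(U,\mathcal{T}_r(n))\to H^q_{Z'}(U,\mathcal{T}_r(n))$ corresponds to $H^q_{F'}(\mathbb{A}^1_V,\mathcal{T}_r(n))\to H^q_{\mathbb{A}^1_F}(\mathbb{A}^1_V,\mathcal{T}_r(n))$. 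The latter is zero by Proposition~\ref{proposition_A1_effacement}. Composing with the restriction $H^q_Z(W,\mathcal{T}_r(n))\to H^q_{f^{-1}(Z)}(U,\mathcal{T}_r(n))$ establishes effaceability at $x$.

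The main technical obstacle I expect is ensuring that the Schmidt-Strunk presentation lemma applies uniformly at every $x\in X$, in particular at points of the special fiber, where the henselian hypothesis on $S$ becomes essential; the stringent requirement that $\phi$ be a \emph{closed immersion} on $f^{-1}(Z)$ (not merely étale there) is what makes the excision identification available, and verifying it in the mixed characteristic setting is the delicate point. Once this geometric input is secured, all the cohomological content is supplied by the projective bundle formula via Proposition~\ref{proposition_A1_effacement}; no form of strict $\mathbb{A}^1$-invariance is required, which is essential since $p$-adic étale Tate twists do not satisfy it.
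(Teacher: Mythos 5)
Your strategy for the codimension $\geq 2$ cases is essentially the paper's, and you identify the key cohomological input (Proposition~\ref{proposition_A1_effacement} via the projective bundle formula) correctly. But the proposal has a serious gap: the Schmidt--Strunk presentation lemma \cite[Thm.~2.4]{SS18} requires that $Z$ \emph{not} contain the whole special fiber of $W$ (i.e.\ $Z_k \neq W_k$). For $Z \in Z^{p+1}(W)$ with $p \geq 1$ this is automatic, since then $\operatorname{codim}_W Z \geq 2 > 1 = \operatorname{codim}_W W_k$. But for $p = 0$ the closed subset $Z$ may have codimension exactly $1$ and may be (a component of) the special fiber itself, in which case the presentation lemma gives you nothing. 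Your proof therefore only establishes the vanishing of $(*)\colon \mathcal{H}^q_{Z^{p+1}}(\mathcal{T}_r(n)) \to \mathcal{H}^q_{Z^p}(\mathcal{T}_r(n))$ for $p > 0$, and says nothing about $p = 0$. This is not a technicality: the $p = 0$ step is where the bulk of the $p$-adic content of the theorem enters and where the paper deviates most sharply from the classical Gabber/Colliot-Th\'el\`ene--Hoobler--Kahn template.

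The paper handles $p = 0$ by a genuinely different argument. One reduces to proving injectivity of $H^q(\mathcal{O}^h_{X,x}, \mathcal{T}_r(n)) \to H^q(K(\mathcal{O}^h_{X,x}), \mathcal{T}_r(n))$, factored through the localization at the generic point $\eta$ of the special fiber. The step $H^q(\mathcal{O}^h_{X,x}, \mathcal{T}_r(n)) \to H^q((\mathcal{O}^h_{X,x})_\eta, \mathcal{T}_r(n))$ is handled by the presentation-lemma argument (applied to codimension-one closed subsets \emph{not} containing $\eta$, for which $Z_k \neq W_k$ does hold); the step $H^q((\mathcal{O}^h_{X,x})_\eta, \mathcal{T}_r(n)) \to H^q(K(\mathcal{O}^h_{X,x}), \mathcal{T}_r(n))$ is Proposition~\ref{prop_effaceability}, whose proof requires the purity isomorphism of Proposition~\ref{proposition_purity_tate_twists}, the top-cohomology computation of Theorem~\ref{theorem_top_cohomology} (itself resting on Gabber's affine analogue of proper base change), and the comparison with Milnor $K$-theory from \cite{LuMo20}. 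None of this appears in your proposal. Finally, the presentation lemma requires an infinite residue field; you also omit the standard norm argument reducing the finite residue field case to the infinite one, which the paper flags explicitly.

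One smaller remark: the parenthetical ``after enlarging $Z$ we may assume $x \in Z$'' is both unnecessary (effaceability does not require $x \in Z$, only that the witness $Z'$ contain both $f^{-1}(Z)$ and $x$) and potentially ill-posed, since enlarging $Z$ to contain $\overline{\{x\}}$ could drop its codimension below $p+1$.
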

\begin{proof}
For the following argument with torsion coefficients of order prime to $p$ see \cite[Cor. 3.5]{Lu2021} and \cite[Sec. 5,6]{SS19}. We need to show that the natural map $$(*)\colon\cal H^q_{Z^{p+1}}(\cal T_r(n))\to \cal H^q_{Z^p}(\cal T_r(n))$$ is zero in the Nisnevich topology for all $p,q,n\in \mathbb{N}$. We show $(*)=0$ on stalks. Let $x\in X$. We first assume that $k$ is infinite.
Let $W$ be an affine Nisnevich neighbourhood of $x$ and $Z\in Z^{p+1}(W)$ and assume that $Z_k\neq W_k$, $Z_k$ and $W_k$ being the special fibers of $Z$ and $W$ respectively. Then by \cite[Thm. 2.4]{SS18} we can find a Nisnevich neighbourhood $f:U\to W$ of $x$ and an \'etale morphism
$$\rho=(\psi,\nu): U\to V\times\bb A^1_S\subset \bb A^{d-1}_S\times \bb A^1_S,$$
where $V$ is an open subset of $\bb A^{d-1}_S$, with the following properties: Let $Z':=f^{-1}(Z)$, $F:=\psi(Z)$, $Z'':=\psi^{-1}(F)$ and $F':= \rho(Z')$. Then the map $Z'\to V$ is finite and $(U,Z')$ is a Nisnevich neighbourhood of $(\bb A^1_V,F')$. Therefore the left vertical morphism in the commutative diagram
$$\xymatrix{
 H^{q}_{Z'}(U,\cal T_r(n)) \ar[r]^{}  & H^{q}_{Z''}(U,\cal T_r(n))  \\
 H^{q}_{F'}(\bb A^1_V,\cal T_r(n)) \ar[r] \ar[u]_\cong  & H^{q}_{\bb A^1_F}(\bb A^1_V,\cal T_r(n)) \ar[u]  
}$$
is an isomorphism by excision and the lower horizontal morphism is zero by Proposition \ref{proposition_A1_effacement}. This implies that the composition 
$$H^*_{Z}(W,\cal T_r(n))\to H^*_{Z'}(U,\cal T_r(n))\to H^*_{Z''}(U,\cal T_r(n))$$
is zero.

We therefore get that $(*)\colon\cal H^q_{Z^{p+1}}(\cal T_r(n))\to \cal H^q_{Z^p}(\cal T_r(n))$ is zero for all $p>0$ and it remains to show that $$(*)\colon\cal H^q_{Z^{1}}(\cal T_r(n))\to \cal H^q_{Z^0}(\cal T_r(n))$$ is zero. This map fits into the exact sequence $\cal H^q_{Z^{1}}(\cal T_r(n))\to \cal H^q_{Z^0}(\cal T_r(n))\to \cal H^q_{Z^0/Z^1}(\cal T_r(n))$. By definition $\cal H^q_{Z^0}(\cal T_r(n))_x\cong H^q_{}(\roi_{X,x}^h,\cal T_r(n))$ and $\cal H^q_{Z^0/Z^1}(n)_x\cong H^q_{}(K(\roi^h_{X,x}),\cal T_r(n))$ and we may assume that $x\in X_k$. We can therefore also show that the composition  $ H^q_{}(\roi_{X,x}^h,\cal T_r(n))\to H^q_{}((\roi_{X,x}^h)_{\eta},\cal T_r(n)) \to H^q_{}(K(\roi_{X,x}^h),\cal T_r(n))$ is injective, where $\eta$ is the generic point of the special fiber of $\Spec(\roi_{X,x}^h)$. The map $ H^q_{}(\roi_{X,x}^h,\cal T_r(n))\to H^q_{}((\roi_{X,x}^h)_{\eta},\cal T_r(n))$ is obtained by localising the exact sequence $\cal H^q_{Z^1-\eta}(\cal T_r(n))\to \cal H^q_{Z^0}(\cal T_r(n)) \to \cal H^q_{Z^0/(Z^1-\eta)}(\cal T_r(n))$ at $x$ and by what we showed above, the first map in this sequence of sheaves is zero.  
The map $H^q_{}((\roi_{X,x}^h)_{\eta},\cal T_r(n)) \to H^q_{}(K(\roi_{X,x}^h),\cal T_r(n))$ is injective by the following proposition. 


Finally, it follows from a norm argument
that we can drop the assumption that the residue field of $A$ is infinite. 
We give the argument for this reduction for the injectivity following \cite[Lem. 1.9]{LuMo20}; the exactness at the other places can be shown similarly.
We only need to worry about the case that $A$ has finite residue field $k$. Let $R:=\roi_{X,x}^h$ for $x$ a point of the special fiber and let $L$ denote its residue field. Since $k$ is perfect, $L$ is a finitely generated, separable field extension of $k$ and we may therefore realise $L$ as a finite separable extension of a rational function field $k(\ul t):=k(t_1,\dots,t_d)$.
We fix a prime $p$ which is coprime to $|L:k(\ul t)|$, and let $k^{(r)}$ be the unique degree $p^r$ extension of the finite field $k$. Note that $L\otimes_kk^{(r)}=L\otimes_{k(\ul t)} k^{(r)}(\ul t)$ is the tensor product of finite field extensions of coprime degree, hence is a field.
Let $A^{(r)}$ be the finite \'etale extension of $A$ corresponding to the extension $k^{(r)}$ of $k$, and set $R^{(r)}:=R\otimes_{A}A^{(r)}$. Note that $R^{(r)}$ is local since $R^{(r)}/\mathfrak m_RR^{(r)}=L\otimes_kk^{(r)}$ is a field, and henselian since it is finite over $R$. 
Consider the commutative diagram
\[
  \xymatrix{
  R^q\epsilon_*\cal T_r(n)(R^{(r)}) \ar[r]^-{} \ar@{-->}@/^5mm/[d]^N  &  R^q\epsilon_*\cal T_r(n)(\Frac(R^{(r)}))   \ar@{-->}@/^5mm/[d]^N 
  \\
R^q\epsilon_*\cal T_r(n)(R) \ar[u]  \ar[r] & \ar[u]_{} R^q\epsilon_*\cal T_r(n)(\Frac(R))  
  }
\]
in which the dotted arrows correspond to the norm map $N$ (see for example \cite[Lem. 3.4]{LuMo20} or construct the norm map using the distinguished triangle in Section \ref{section_etaletatetwists} defining $p$-adic \'etale Tate twists). Let $a$ be in the kernel of the lower horizontal map. For primes $p$ and $p'$ as above with $(p,p')=1$, by a filtered colimit argument we get that for $r$ large enough $p^ra=0=p'^ra$ and therefore $a=0$.
\end{proof}

\begin{proposition}\label{prop_effaceability} 
Let $X$ and $S$ be as above and let $x\in X$ a point of the special fiber. Then the canonical map
$$H^{q}(\roi^h_{X,x},\cal T_r(n)) \to   H^{q}(\roi^h_{X,x}[\frac{1}{\pi}],\cal T_r(n))$$
is injective for all $q\geq 0$ and $n\geq 0$.
\end{proposition}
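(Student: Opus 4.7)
The plan is to combine the localization triangle for $j\colon \Spec R[1/\pi]\hookrightarrow \Spec R\hookleftarrow \Spec(R/\pi)=:Z\colon i$ (writing $R:=\roi^h_{X,x}$) with Sato's defining triangle, and to exploit two vanishing facts coming from cohomological dimension. First I would observe that the statement is vacuous outside a bounded range of degrees: since the residue field $k(x)$ has characteristic $p$, its $p$-cohomological dimension is at most $1$, and combined with property~(ii) that the cohomology sheaves of $\cal T_r(n)$ are concentrated in degrees $[0,n]$, the hypercohomology spectral sequence on the henselian local scheme $\Spec R$ forces $H^q(R,\cal T_r(n))=0$ for $q\geq n+2$. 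It therefore suffices to establish the injectivity in the range $q\leq n+1$.

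For $q\leq n$, I would apply $R\Gamma(R,-)$ to the triangle $\cal T_r(n)\to \tau_{\leq n}Rj_*\mu_{p^r}^{\otimes n}\to i_*W_r\Omega^{n-1}_{Z,\log}[-n]$ from Section~\ref{section_etaletatetwists}. The term preceding $H^q(R,\cal T_r(n))$ in the long exact sequence is $H^{q-n-1}(Z, W_r\Omega^{n-1}_{Z,\log})$, which vanishes since $q-n-1<0$; hence $H^q(R,\cal T_r(n))\hookrightarrow H^q(R,\tau_{\leq n}Rj_*\mu_{p^r}^{\otimes n})$. The truncation triangle $\tau_{\leq n}Rj_*\mu\to Rj_*\mu\to\tau_{>n}Rj_*\mu$ then identifies $H^q(R,\tau_{\leq n}Rj_*\mu_{p^r}^{\otimes n})\isoto H^q(R[1/\pi], \mu_{p^r}^{\otimes n})$ in this range, since $H^*(R,\tau_{>n}Rj_*\mu)$ vanishes in all degrees $\leq n$ for the same spectral-sequence reasons ($\tau_{>n}Rj_*\mu$ has no cohomology in degrees $\leq n$, and $R$ has no cohomology in degrees $\geq 2$). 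This proves injectivity for $q\leq n$.

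For $q=n+1$, the same long exact sequence reads
$$H^n(R,\tau_{\leq n}Rj_*\mu_{p^r}^{\otimes n}) \xto{\sigma^n} H^0(Z, W_r\Omega^{n-1}_{Z,\log}) \to H^{n+1}(R,\cal T_r(n)) \xto{\alpha} H^{n+1}(R,\tau_{\leq n}Rj_*\mu_{p^r}^{\otimes n}),$$
so $\alpha$ is injective if and only if $\sigma^n$ is surjective. By the previous step the source of $\sigma^n$ is $H^n(R[1/\pi], \mu_{p^r}^{\otimes n})$, and $\sigma^n$ coincides with the Bloch-Kato-Kato residue symbol. Its surjectivity follows from Kato's theorem for henselian valuation rings. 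Finally, $H^{n+1}(R,\tau_{\leq n}Rj_*\mu_{p^r}^{\otimes n})\hookrightarrow H^{n+1}(R[1/\pi], \mu_{p^r}^{\otimes n})$ since the preceding term $H^n(R,\tau_{>n}Rj_*\mu)$ vanishes by degree reasons; composing the two injections yields the injectivity at $q=n+1$.

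The main obstacle is invoking the Bloch-Kato-Kato surjectivity of the residue map at higher-codimension points of the special fiber, where $R$ is not itself a discrete valuation ring; this requires a compatibility/specialization argument reducing to the classical DVR case via the localization at the height-one prime $(\pi)$.
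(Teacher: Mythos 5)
Your overall strategy mirrors the paper's: split into $q\leq n$, $q=n+1$, and $q\geq n+2$, handle the low range from the defining triangle of $\cal T_r(n)$, prove a vanishing in the high range from $p$-cohomological dimension, and isolate $q=n+1$ as the critical degree. The $q\leq n$ step and the high-degree vanishing are essentially correct (though for the latter you should phrase the bound in terms of Gabber's affine analogue of proper base change reducing to the affine characteristic-$p$ scheme $\Spec(R/\pi)$ rather than the cohomological dimension of the residue field $k(x)$ --- the relevant fact, as in Theorem~\ref{theorem_top_cohomology}, is that $p$-cohomological dimension of an affine noetherian $\mathbb F_p$-scheme is $\leq 1$, and the parenthetical ``$R$ has no cohomology in degrees $\geq 2$'' is neither needed nor true without that reduction).

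The genuine gap is exactly where you flag it: at $q=n+1$ you reduce injectivity to surjectivity of a residue/symbol map $\sigma^n\colon H^n(R[1/\pi],\mu_{p^r}^{\otimes n})\to H^0(Y_x,W_r\Omega^{n-1}_{\log})$, and then invoke ``Kato's theorem for henselian valuation rings.'' But $R=\roi^h_{X,x}$ is a higher-dimensional henselian regular local ring, not a DVR, so the classical Bloch--Kato--Kato surjectivity does not apply directly, and the ``compatibility/specialization argument'' you gesture at is nontrivial --- it is essentially the content of the result being used. The paper avoids this by invoking the specific Theorem~1.7 of \cite{LuMo20}, which produces an isomorphism of short exact sequences
$$0\to \hat K^M_n(R)/p^r\to K^M_n(R[1/\pi])/p^r\to W_r\Omega^{n-1}_{Y_x,\log}\to 0$$
with the corresponding $\cal T_r(n)$-cohomology sequence (after identifying $H^{n+1}_{Y_x}(R,\cal T_r(n))\cong W_r\Omega^{n-1}_{Y_x,\log}$ via Proposition~\ref{proposition_purity_tate_twists}); this immediately forces $H^{n+1}(R,\cal T_r(n))\hookrightarrow H^{n+1}(R[1/\pi],\cal T_r(n))$. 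Without citing such a result, your $q=n+1$ case is not closed: you would need to supply the residue surjectivity (or equivalently the exactness at $W_r\Omega^{n-1}_{\log}$) for $R$ henselian local of arbitrary dimension, which is precisely what that reference settles.
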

\begin{proof}
Let $Y_x=\Spec((\roi^h_{X,x})/\pi)$ and $i:Y_x\to \Spec(\roi^h_{X,x})$ be the inclusion. Then $c=\mathrm{codim}_{\Spec(\roi^h_{X,x})}(Y_x)=1$ and we consider the long exact localisation sequence
$$\xymatrix{ 
\dots \quad \ar[r]& H^{n-1}_{Y_x}(\roi^h_{X,x},\cal T_r(n))  \ar[r] & H^{n-1}(\roi^h_{X,x},\cal T_r(n)) \ar[r] &  H^{n-1}(\roi^h_{X,x}[\frac{1}{\pi}],\cal T_r(n))\ar[r] &  & \\
& H^{n}_{Y_x}(\roi^h_{X,x},\cal T_r(n)) \ar[r] &  H^{n}(\roi^h_{X,x},\cal T_r(n)) \ar[r] &  H^{n}(\roi^h_{X,x}[\frac{1}{\pi}],\cal T_r(n))\ar[r] & & \\
& H^{n+1}_{Y_x}(\roi^h_{X,x},\cal T_r(n)) \ar[r] &  H^{n+1}(\roi^h_{X,x},\cal T_r(n)) \ar[r] &  H^{n+1}(\roi^h_{X,x}[\frac{1}{\pi}],\cal T_r(n))\ar[r] & \quad \dots &
}$$

First note that $H^{m}_{Y_x}(\roi^h_{X,x},\cal T_r(n))=0$ for $m\leq n$ since by definition the natural map $H^{m}_{}(\roi^h_{X,x},\cal T_r(n))\to H^{m}_{}(\roi^h_{X,x}[\frac{1}{\pi}],\cal T_r(n))$ is an isomorphism for $m<n$ and $H^{n}(\roi^h_{X,x},\cal T_r(n)) \to   H^{n}(\roi^h_{X,x}[\frac{1}{\pi}],\cal T_r(n))$ is injective. By Proposition \ref{proposition_purity_tate_twists}, we have that 
$H^{n+1}_{Y_x}(\roi^h_{X,x},\cal T_r(n))\cong W_r\Omega^{n-1}_{Y_x,\log}$ and by \cite[Thm. 1.7]{LuMo20} we have an isomorphism of short exact sequences
$$\xymatrix{
0\ar[r] & \hat{K}^{n}(\roi^h_{X,x})/p^r \ar[r] \ar[d] & K^{n}(\roi^h_{X,x}[\frac{1}{\pi}])/p^r \ar[r] \ar[d]  & W_r\Omega^{n-1}_{Y_x,\log} \ar[r] \ar[d]  &0 \\
0\ar[r] & H^{n}(\roi^h_{X,x},\cal T_r(n)) \ar[r] & H^{n}(\roi^h_{X,x}[\frac{1}{\pi}],\cal T_r(n)) \ar[r] & W_r\Omega^{n-1}_{Y_x,\log} \ar[r] &0 
}$$
implying that $H^{n+1}(\roi^h_{X,x},\cal T_r(n)) \to  H^{n+1}(\roi^h_{X,x}[\frac{1}{\pi}],\cal T_r(n))$ is injective. Finally, $$H^{m}(\roi^h_{X,x},\cal T_r(n))=0$$ for $m>n+1$. Indeed, see also the next section, by Gabber's affine analogue of proper base change \cite[Thm. 1]{GabberAffineAnalogue}, there is an isomorphism $H^{m}(\roi^h_{X,x},\cal T_r(n))\cong H^{m}(Y_x,i^*\cal T_r(n))$. But $H^{m}(Y_x,i^*\cal T_r(n))=0$ for $m>n+1$ since $\cal T_r(n)$ is concentrated in $[0,n]$ and the $p$-cohomological dimension of an affine noetherian scheme of characteristic $p$ is $\leq 1$ (see \cite[Exp. X, Thm. 5.1]{SGA4}).
\end{proof}

\begin{remark}
The relationship with \cite{SS19} is the following: Schmidt-Strunk in \textit{loc. cit.} use strict $\bb A^1$-invariance in order to obtain Lemma \ref{prop_commutativity} and therefore their results just work for such theories. Sato's $p$-adic \'etale Tate twists are not strict $\bb A^1$-invariant and we therefore use that they have a projective bundle formula in order to prove the required commutativity.
\end{remark}

\begin{remark}\label{remark_Elmanto_Morrow}
In the context of their motivic cohomology of equicharacteristic schemes, Elmanto-Morrow have introduced the term \textit{deflatable}. We first recall the definition, replacing the base field by an arbitrary base scheme $S$.
Let $\cal F:{\rm Sm}^{\rm op}_S\to \rm Sp$ be a presheaf. For $X\in {\rm Sm}_S$ we write $\cal F^X$ for the presheaf $U\mapsto \cal F(U\times_SX)$. There are morphisms of presheaves $$j^*,\pi^*\infty^*:\cal F^{\bb P^1}\to \cal F^{\bb A^1},$$
induced by the following morphisms of schemes: 
\begin{enumerate}
\item[1.] $\pi$ is the projection $\bb A^1_S\to S$,
\item[2.] $\infty$ is the closed immersion $S\to \bb P^1_S$ at infinity,
\item[3.] $j: \bb A^1_S\to\bb P^1_S$ is the open immersion complementary to the immersion in 2.
\end{enumerate}
\begin{definition}(\cite[Def. 6.7]{ElmantoMorrow})
A presheaf $\cal F:{\rm Sm}^{\rm op}_S\to \rm Sp$ is \textit{deflatable} if the maps $j^*$ and $\pi^*\infty^*$ are homotopic.
\end{definition}
Assuming that $S$ is the spectrum of a henselian dvr with perfect residue field, our arguments show 
that any Nisnevich presheaf $\cal F:{\rm Sm}^{\rm op}_S\to \rm Sp$ which satisfies the $\bb P^1$-bundle formula is deflatable. Furthermore, if 
\begin{enumerate}
\item[1.] $\cal F$ is deflatable,
\item[2.] the natural map $\pi_q\cal F(-)\to \pi_q\cal F(-[\frac{1}{\pi}])$ is injective for all $q$,
\item[3.] $\cal F$ is finitary (i.e. commute with filtered colimits),
\item[4.] there exists a functorial norm map $N_{B'/B}:\cal F(B')\to \cal F(B)$ for any finite \'etale extension of local rings $B\to B'$ such that if $B\to B'$ is of constant degree $d$, then the composition $\cal F(B)\to\cal F(B')\xto{N_{B'/B}} \cal F(B')$ is multiplication by $d$,
\end{enumerate}
then for all integers $q$, $\pi_q\cal F(-)$ satisfies the Gersten injectivity for smooth $S$-schemes
(see \cite[Lem. 6.11]{ElmantoMorrow} in equal characteristic) and has a Gersten resolution (for more details on the Gersten resolution for spectra see \cite[Sec. 5.2]{CHK97} and \cite{SS19}) in the Nisnevich topology.
\end{remark}

\section{The $p$-adic cycle class map} 
We fix the following notation. Let $A$ be a discrete valuation ring with local parameter $\pi$, perfect residue field $k$ of characteristic $p>0$ and fraction field $K$ of characteristic zero. Let $S=\Spec(A)$. Let $X=\Spec(B)$ be an ind-smooth $S$-scheme. Let $I=(\pi)$ be the ideal defining the special fiber or let $I=\mathfrak{m}_B$ be the maximal ideal of $B$. We denote by $B^h$ the henselisation of $B$ along $I$. Let $i:\Spec B/I\to \Spec B^h$ be the inclusion. In the entire section let $n\geq 0$.
 
By Gabber's affine analogue of proper base change \cite[Thm. 1]{GabberAffineAnalogue},\footnote{If $I=\mathfrak{m}_B$ then this is easier since then the map $B^h\to B/I$ induces an equivalence between the categories of finite \'etale $B^h$-algebras and finite \'etale $B/I$-algebras and any \'etale cover of $B^h$ has a refinement by a finite \'etale cover. The statement then follows from \cite[III.3.3]{Milne1986}.} the restriction map
$$H^{s}(\Spec B^h,\T_r(n))\xto\cong H^{s}(\Spec B/I,i^*\T_r(n))$$
is an isomorphism for all $s$.
For the following theorem see also \cite[Sec. 5]{AMMN} and \cite[Thm. 3.7]{sakagaito2023etale}. In the proof we closely follow the proof given in \cite[Thm. 3.7]{sakagaito2023etale}.

\begin{theorem}\label{theorem_top_cohomology}
Let the notation be as above. Then $R\epsilon_* i^*\T_r(n)\in D((\Spec B/I)_{\rm Nis},\bb Z/p^r)$ is concentrated in degree $[0,n+1]$. Furthermore,
$$H^{n+1}(\Spec B^h,\T_r(n))\cong H^{1}(\Spec B/I,W_r\Omega^n_{\log}).$$
\end{theorem}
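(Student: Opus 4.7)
The plan is to prove both parts of the theorem simultaneously by analysing Sato's defining triangle
$$\iota_* W_r\Omega^{n-1}_{Y, \log}[-n-1] \to \T_r(n) \to \tau_{\leq n} Rj_* \mu_{p^r}^{\otimes n}$$
on $\Spec B^h$, where $\iota$ and $j$ denote the inclusions of the special fibre $Y=\Spec(B^h/\pi)$ and its open complement. Applying $R\epsilon_* i^*$ produces a distinguished triangle in $D((\Spec B/I)_{\rm Nis},\bb Z/p^r)$, and Gabber's affine analogue of proper base change reduces each stalkwise claim to a statement about $H^m_\et(\Spec R, \T_r(n))$ for $R$ an appropriate henselisation of $B$ essentially smooth over $A$.

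First I would bound the cohomology of the two outer terms. For $\iota_* W_r\Omega^{n-1}_{Y, \log}[-n-1]$, Bloch-Kato-Gabber gives that $W_r\Omega^{n-1}_{\log}$ has étale cohomological dimension $\leq 1$ on Nisnevich henselisations of $Y$, so this piece contributes only to degrees $n+1$ and $n+2$. For $\tau_{\leq n} Rj_* \mu_{p^r}^{\otimes n}$ I would combine the canonical triangle $\tau_{\leq n} Rj_* \mu_{p^r}^{\otimes n} \to Rj_* \mu_{p^r}^{\otimes n} \to \tau_{>n} Rj_* \mu_{p^r}^{\otimes n}$ with the Hyodo-Bloch-Kato description of the higher direct images $R^q j_* \mu_{p^r}^{\otimes n}$: for $q\geq 1$ they are supported on $Y$, admit a filtration by logarithmic de Rham-Witt sheaves, and vanish for $q > n+1$. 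This, together with Bloch-Kato-Gabber again, should deliver $H^m(R, \tau_{\leq n} Rj_* \mu_{p^r}^{\otimes n}) = 0$ for all $m \geq n+2$.

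Splicing the two bounds through the long exact sequence then gives $H^m(R, \T_r(n)) = 0$ for $m \geq n+3$ immediately. The delicate case $m = n+2$ reduces to proving surjectivity of the boundary $H^{n+1}(R, \tau_{\leq n} Rj_* \mu_{p^r}^{\otimes n}) \to H^1(Y, W_r\Omega^{n-1}_{\log})$, which I would derive from the Kato-Hyodo short exact sequence recalled in \cite[Thm.~1.7]{LuMo20}. For the identification $H^{n+1}(\Spec B^h, \T_r(n)) \cong H^1(\Spec B/I, W_r\Omega^n_{\log})$, I would then chase degree $n+1$ of the same long exact sequence: the $H^0(Y, W_r\Omega^{n-1}_{\log})$ contribution is killed by the surjective connecting map from $H^n$ of $\tau_{\leq n} Rj_* \mu_{p^r}^{\otimes n}$ (once more by Kato-Hyodo), leaving $H^{n+1}$ of $\tau_{\leq n} Rj_* \mu_{p^r}^{\otimes n}$, which via the Bloch-Kato filtration on $i^* R^n j_* \mu_{p^r}^{\otimes n}$ and Bloch-Kato-Gabber maps isomorphically onto $H^1$ of $W_r\Omega^n_{Y, \log}$. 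The principal obstacle I expect is the precise identification of these edge maps and the careful bookkeeping in degree $n+2$.
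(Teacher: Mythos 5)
Your outline circles the right ingredients (Sato's defining triangle, the Bloch--Kato filtration, affine analogue of proper base change, the $p$-cohomological dimension bound), but it both misses the short argument the paper actually uses for the first half and contains an error in the second half.

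For the concentration in $[0,n+1]$, the paper's argument is much shorter than your term-by-term analysis: by Sato's Property (ii), $\T_r(n)$ --- and hence $i^*\T_r(n)$ --- is concentrated as a complex of sheaves in degrees $[0,n]$, and on the affine noetherian characteristic-$p$ scheme $\Spec B/I$ the mod-$p$ \'etale cohomological dimension is $\leq 1$; combined with Gabber's affine analogue of proper base change, this immediately kills $H^m$ for $m>n+1$, with no need to decompose the triangle or prove any boundary surjectivity. Your route requires establishing that $H^{n+1}(\tau_{\leq n}Rj_*\mu_{p^r}^{\otimes n})\to H^1(Y,W_r\Omega^{n-1}_{\log})$ is onto; this is true (it is surjective because its sheaf-level target has no $H^2$ on an affine char-$p$ scheme), but deriving it from the $K$-theoretic sequence of \cite[Thm.~1.7]{LuMo20}, which is a statement on sections and not on $H^1$, is not a direct step. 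You would be proving a harder lemma to replace a one-line dimension bound.

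The more serious issue is in your identification of $H^{n+1}$. You write that $H^{n+1}(\tau_{\leq n}Rj_*\mu_{p^r}^{\otimes n})$ ``maps isomorphically onto'' $H^1(W_r\Omega^n_{\log})$. This is not correct: by the spectral sequence (again using concentration plus cd $\leq 1$) one has $H^{n+1}(\tau_{\leq n}Rj_*\mu_{p^r}^{\otimes n})\cong H^1(\Spec B/I,\, i^*R^nj_*\mu_{p^r}^{\otimes n})$, and the sheaf $M^n_r:=i^*R^nj_*\mu_{p^r}^{\otimes n}$ has $W_r\Omega^{n-1}_{\log}$ as its top quotient with kernel $FM^n_r$, which in turn has $W_r\Omega^n_{\log}$ as a quotient. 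So $H^1(M^n_r)$ is a (generally nontrivial) extension of $H^1(W_r\Omega^{n-1}_{\log})$ by $H^1(W_r\Omega^n_{\log})$; it is the \emph{kernel} of the boundary $H^1(M^n_r)\to H^1(W_r\Omega^{n-1}_{\log})$ that equals $H^{n+1}(i^*\T_r(n))$, and this kernel is $H^1(FM^n_r)$. The paper gets this cleanly by noting that $\cal H^n(i^*\T_r(n))=FM^n_r$ (the commutative diagram of triangles), so that $H^{n+1}(i^*\T_r(n))\cong H^1(FM^n_r)$ on the nose. The final identification $H^1(FM^n_r)\cong H^1(W_r\Omega^n_{\log})$ then requires the unit filtration $0\to U^1M^n_r\to FM^n_r\to W_r\Omega^n_{\log}\to 0$, where one first treats $r=1$ (there $U^1M^n_1$ is coherent, so its higher cohomology vanishes on the affine scheme) and then runs an induction on $r$ via the five lemma. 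You gesture at this with ``Bloch--Kato filtration,'' but the coherence-at-$r=1$/induction step is the substantive content and should be made explicit, since it is precisely where affineness is used.
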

\begin{proof}
The first statement follows from 
Property (ii) of $\T_r(n)$ and the fact that the $p$-cohomological dimension of an affine noetherian scheme of characteristic $p$ is $\leq 1$ (see \cite[Exp. X, Thm. 5.1]{SGA4}).
We turn to the second statement. By the above, it suffices to show that $$H^{n+1}(\Spec B/I,i^*\T_r(n))\cong H^{1}(\Spec B/I,W_r\Omega^n_{\log}).$$
The commutative diagram of distinguished triangles
$$\xymatrix{
i^*\T_r(n)_X \ar[r]^{} \ar[d]_{} & i^*\tau_{\leq n}Rj_*\mu_{p^r}^{\otimes n}\ar[d]^{}  \ar[r]  & W_r\Omega^{n-1}_{B/I\log}[-n] \ar@{=}[d]_{} \\
FM^n_r \ar[r]  &  i^*R^nj_*\mu_{p^r}^{\otimes n}[-n] \ar[r]^{} & W_r\Omega^{n-1}_{B/I\log}[-n], 
}$$
in which $FM^n_r$ is defined to be the kernel of the lower right horizontal map, gives an isomorphism
$$H^{n+1}(\Spec B/I,i^*\T_r(n))\cong H^{1}(\Spec B/I,FM^n_r).$$
The unit filtration gives an exact sequence 
$$0\to U^1M^n_r\to FM^n_r\to W_r\Omega^n_{\log}\to 0$$
(see \cite[Thm. 3.4.2]{Sa07}).
If $r=1$, then $U^1M^n_r$ is a coherent module given in terms of differential forms (see \cite[Cor. 1.4.1]{BlochKato1986}) and therefore its higher cohomology groups vanish on the affine scheme $\Spec B$. This implies that 
$$H^{n+1}(\Spec B/I,i^*\T_1(n))\cong H^{1}(\Spec B/I,FM^n_1)\cong H^{1}(\Spec B/I,W_1\Omega^n_{\log}).$$
The case $r>1$ follows by induction and the five-lemma from the commutative diagram
$$\xymatrix{
  & H^{n+1}(\Spec B/I,i^*\T_r(n)) \ar[r]^{} \ar[d]_{\cong} & H^{n+1}(\Spec B/I,i^*\T_{r+1}(n)) \ar[d]^{}  \ar[r]  & H^{n+1}(\Spec B/I,i^*\T_1(n)) \ar[d]_{\cong} \ar[r]  & 0 \\
0\ar[r]  & H^{1}(\Spec B/I,W_r\Omega^n_{\log}) \ar[r]  &  H^{1}(\Spec B/I,W_{r+1}\Omega^n_{\log}) \ar[r]^{} & H^{1}(\Spec B/I,W_1\Omega^n_{\log}). & 
}$$

The upper sequence is exact by \cite[Prop. 4.3.1]{Sa07} and the vanishing of $H^{n+2}(\Spec B/I,i^*\T_1(n))$. The lower sequence is exact by \cite[p. 779, Lem. 3]{CSS83} and the the surjectivity of $W_{r+1}\Omega^n_{\log}(\Spec B/I)\cong\cal K^M_{n}(\Spec B/I)/p^{r+1}\to K^M_{n}(\Spec B/I)/p\cong W_1\Omega^n_{\log}$.
\end{proof}

\begin{theorem}\label{theorem_comparison}
Assume that $k$ is finite. Let $X$ be a smooth projective $S$-scheme with special fiber $X_k$ (in the following also denoted by $X_1$). We denote the inclusion of the special fiber by $i:X_k\to X$ and set $X_n=X\times_A A/\pi^n$. Let $d$ be the relative dimension of $X$ over $S$.
Then for $q\in\{0,1\}$ there are isomorphisms
$$H^{d-q}_{\rm Nis}(X_k,i^*\hat{\cal K}^M_{d,X}/p^r) \xto{\cong}  \projlim_n H^{d-q}_{\rm Nis}(X_k,\hat{\cal K}^M_{d,X_n}/p^r)\xto{\cong} H^{2d-q}_\et(X_k,i^*\cal T_r(d)). $$
\end{theorem}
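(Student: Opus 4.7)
The plan is to establish the two isomorphisms separately. The first one, which I will call $\alpha$, is essentially a continuity / Mittag--Leffler statement for (improved) Milnor $K$-theory along the infinitesimal thickenings $X_n$. The second one, $\beta$, is the substantive one: it compares Nisnevich cohomology of the Milnor $K$-sheaf with \'etale cohomology of the $p$-adic Tate twists via the cycle class map. Its proof uses the Gersten conjecture just established (Theorem \ref{theorem_main_in_text}), the top cohomology calculation (Theorem \ref{theorem_top_cohomology}), the comparison of \cite[Thm. 1.7]{LuMo20} on henselian stalks, and the Kato conjectures for smooth projective varieties over finite fields.

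For $\alpha$, I would first note that the restriction maps $\hat{\cal K}^M_{d, X_{n+1}}/p^r \twoheadrightarrow \hat{\cal K}^M_{d, X_n}/p^r$ are surjective, and that by continuity of improved Milnor $K$-theory along a principal nilpotent ideal the natural map $i^*\hat{\cal K}^M_{d, X}/p^r \to \projlim_n \hat{\cal K}^M_{d, X_n}/p^r$ is an isomorphism on stalks at points of $X_k$ (the stalks being computed on henselian local rings of $X$ henselized along $(\pi)$). Since $k$ is finite and $X_k$ is smooth projective, each Nisnevich cohomology group $H^{d-q}_{\rm Nis}(X_k, \hat{\cal K}^M_{d, X_n}/p^r)$ is finite, so the inverse system is Mittag--Leffler; hence $\projlim_n$ commutes with $H^{d-q}_{\rm Nis}(X_k, -)$, which yields $\alpha$.

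For $\beta$, I would first reduce to the henselian case by replacing $X$ with $X^h := X \times_A A^h$: proper base change identifies the right-hand side $H^{2d-q}_\et(X_k, i^*\cal T_r(d))$ with $H^{2d-q}_\et(X^h, \cal T_r(d))$, while the Nisnevich side is unchanged. On $X^h$, Theorem \ref{theorem_main_in_text} provides a Cousin-type flasque resolution of $R^{q'}\epsilon_*\cal T_r(d)$ of length $d+1$, so $H^s_{\rm Nis}(X^h, R^{q'}\epsilon_*\cal T_r(d)) = 0$ for $s > d$; combined with $R^{q'}\epsilon_*\cal T_r(d) = 0$ for $q' > d+1$ (Theorem \ref{theorem_top_cohomology}), the Leray spectral sequence
$$E_2^{s,q'} = H^s_{\rm Nis}(X^h, R^{q'}\epsilon_*\cal T_r(d)) \Longrightarrow H^{s+q'}_\et(X^h, \cal T_r(d))$$
has only a handful of contributing bidegrees in total degree $2d - q$. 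The core identification is $(R^d\epsilon_*\cal T_r(d))|_{X_k} \cong i^*\hat{\cal K}^M_{d, X}/p^r$, which follows from the injection of \cite[Thm. 1.7]{LuMo20} being an isomorphism in the top Milnor-$K$ degree; compatibility of the two Gersten resolutions then gives $H^d_{\rm Nis}(X_k, i^*\hat{\cal K}^M_{d, X}/p^r) \cong H^d_{\rm Nis}(X^h, R^d\epsilon_*\cal T_r(d))$.

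It then remains to eliminate the extra spectral-sequence contributions: for $q = 0$ the only other potentially nonzero term is $E_2^{d-1, d+1} = H^{d-1}_{\rm Nis}(X_k, R^{d+1}\epsilon_*\cal T_r(d))$, which via Theorem \ref{theorem_top_cohomology} is isomorphic to $H^{d-1}_{\rm Nis}(X_k, R^1\pi_{k,*} W_r\Omega^d_{X_k,\log})$ for $\pi_k \colon X_{k,\et} \to X_{k,\rm Nis}$; for $q = 1$ two further terms, $E_2^{d-2, d+1}$ and $E_2^{d, d-1}$, appear. These groups are controlled by the Kato complex of Milnor $K$-theory on $X_k$, and their vanishing in the relevant degrees is exactly the content of the Kato conjectures for smooth projective varieties over finite fields. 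The main obstacle is the bookkeeping: checking that the spectral sequence differentials and extensions interact correctly with the cycle class map, and that the spurious terms to be killed really correspond to Kato-complex contributions in the correct degrees. Once these compatibilities are verified, composing the identifications gives $\beta$, and combining $\beta$ with $\alpha$ yields the theorem.
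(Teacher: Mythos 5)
Your overall strategy (Leray spectral sequence for $\epsilon$, Theorem \ref{theorem_top_cohomology}, the identification of the degree-$d$ sheaf via \cite{LuMo20}, and the Kato conjectures) matches the paper's, but several steps diverge and some leave genuine gaps.

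\textbf{First isomorphism.} You argue via Mittag--Leffler, which requires finiteness of $H^{d-q}_{\rm Nis}(X_k, \hat{\cal K}^M_{d,X_n}/p^r)$ for all $n$; this is not justified. For $n>1$ these sheaves live on non-reduced thickenings and finiteness is not automatic without essentially the stabilization statement one is trying to prove. The paper's argument is different and sharper: after a norm reduction to a large residue field, the kernel of $i^*\cal K^M_{d,X}\to \cal K^M_{d,X_n}$ is locally generated by symbols $\{1+\pi^n a, f_1,\dots,f_{d-1}\}$, and $(1+\pi^n\roi_X)/p^r = 0$ for $n\gg 0$ by Hensel's lemma. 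Hence the transition maps of sheaves become isomorphisms for $n\gg 0$, the inverse system is eventually constant, and the first isomorphism follows trivially (and for all $q$, not just $q\in\{0,1\}$).

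\textbf{Second isomorphism.} The paper works directly on $X_k$ with the Leray spectral sequence
$H^p_{\rm Nis}(X_k, R^q\epsilon_*(i^*\cal T_r(d))) \Rightarrow H^{p+q}_\et(X_k, i^*\cal T_r(d))$, using Geisser's result $R^q\epsilon_*(i^*\cal T_r(d))\cong i^*R^q\epsilon_*\cal T_r(d)$, and does \emph{not} henselize $A$, does \emph{not} invoke Theorem \ref{theorem_main_in_text}, and hence avoids the step you assert as ``compatibility of the two Gersten resolutions'' to compare $H^d_{\rm Nis}(X^h,-)$ with $H^d_{\rm Nis}(X_k,-)$. That comparison is non-trivial and you give no argument for it. Moreover, the Gersten resolution on $X^h$ (which has dimension $d+1$, since $d$ is the relative dimension) bounds Nisnevich cohomological dimension by $d+1$, not by $d$ as you state.

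\textbf{The term $E_2^{d,d-1}$.} You correctly flag this term for $q=1$, but then assert its vanishing ``is exactly the content of the Kato conjectures.'' This conflates two different rows of the spectral sequence: the Kato conjectures (via \cite[Thm. 0.3]{JS}) give $H^q_{\rm Nis}(X_k, R^1\epsilon_*W_r\Omega^d_{\log}) = 0$ for $q<d$, which kills the row $q'=d+1$; the term $E_2^{d,d-1}=H^d_{\rm Nis}(X_k,R^{d-1}\epsilon_*(i^*\cal T_r(d)))$ sits in the row $q'=d-1$ with entirely different coefficients. You would need a separate argument that this term vanishes or does not obstruct the exact sequence; your proposal does not supply one.
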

\begin{proof}
In the following we denote the change of sites $X_\et\to X_{\rm Nis}$ as well as the change of sites $({X_k})_\et\to ({X_k})_{\rm Nis}$ by $\epsilon$. We first note that the Galois symbol map $i^*\hat{\cal K}^M_{d,X,\rm Nis}/p^r\to i^*\cal H^d(\cal T_r(d)_{X,\rm Nis})$ factorises through $\K^M_{d,X_n,\rm Nis}/p^r$ for $n$ large enough. Via a norm argument one first reduces this statement to the case in which the residue field is large and therefore the improved Milnor $K$-theory coincides with ordinary Milnor $K$-theory. This case can be deduced from the short exact sequence $0\to 1+\pi^n\roi_X \to \roi_X^\times \to \roi_{X_n}^\times \to 0$, which induces the unit filtration on Milnor $K$-theory, noticing that in the Nisnevich topology the group $(1+\pi^n\roi_X)/p^r=0$ for $n$ large enough by Hensel's lemma \cite[II, Lem. 2]{Elkik1973}. 
Indeed, the kernel of the restriction map $i^*\cal K^M_{d,X}\to \cal K^M_{d,X_n}$ is locally generated by elements of the form $\{1+\pi^n a,f_1,\dots,f_{d-1}\}$ with $f_1,\dots,f_{d-1}$ units \cite[Lemma 1.3.1]{KaS86}. This implies the first isomorphism of the theorem (without any restriction on $q$).
Next we consider the Leray spectral sequence
$$H^p_{\rm Nis}(X_k,R^q\epsilon_*(i^*\cal T_r(d)_X))\Longrightarrow H^{p+q}_\et(X_k,i^*\cal T_r(d)).$$
By \cite[Prop. 2.2(b)]{Geisser2004} we have that $R^q\epsilon_*(i^*\cal T_r(d)_X)\cong i^*R^q\epsilon_*(\cal T_r(d)_{X})$.
By \cite{LuMo20} the map $i^*\hat{\cal K}^M_{d,X,\rm Nis}/p^r\to  i^*R^d\epsilon_*(\cal T_r(d)_{X})$ is an isomorphism and by Theorem \ref{theorem_top_cohomology} we know that $R^{d+1}\epsilon_*(i^*\cal T_r(d)_X)= R^1\epsilon_*W_r\Omega^d_{\log}$ and $R^q\epsilon_*(i^*\cal T_r(d)_X)=0$ for $q>d+1$. 
This implies the existence of an exact sequence 
$$H^{d-3}_{\rm Nis}(X_k,R^1\epsilon_*W_r\Omega^d_{\log})\to H^{d-1}_{\rm Nis}(X_k,i^*\hat{\cal K}^M_{d,X}/p^r)\to H^{2d-1}_\et(X_k,i^*\cal T_r(d))\to $$
$$H^{d-2}_{\rm Nis}(X_k,R^1\epsilon_*W_r\Omega^d_{\log})\to H^{d}_{\rm Nis}(X_k,i^*\hat{\cal K}^M_{d,X}/p^r)\to H^{2d}_\et(X_k,i^*\cal T_r(d))\to $$
$$H^{d-1}_{\rm Nis}(X_k,R^1\epsilon_*W_r\Omega^d_{\log})\to 0\to H^{2d+1}_\et(X_k,i^*\cal T_r(d)) \to  $$
$$H^{d}_{\rm Nis}(X_k,R^1\epsilon_*W_r\Omega^d_{\log})\to 0.$$
For the exactness of the first row note that the Gersten resolution implies that $H^{d}_{\rm Nis}(X_k,R^{d-1}\epsilon_*(i^*\cal T_r(d)_X))=0$. The Kato conjectures say that $H^{q}_{\rm Nis}(X_k,R^1\epsilon_*W_r\Omega^d_{\log})=0$ for $q<d$ and isomorphic to $\bb Z/p^r$ for $q=d$. By \cite[Thm. 0.3]{JS} this holds for $d-4\leq q \leq d$ which implies the theorem.
\end{proof}

Let $q\in\{0,1\}$. Theorem \ref{theorem_comparison} implies that there is a commutative diagram
$$
\xymatrix{
H^{d-q}_{\rm Nis}(X,\hat{\cal K}^M_{d,X}/p^r)\ar[r]^-{res^d_{/p^r}} \ar[d]_{cl_X} & \projlim_n H^{d-q}_{\rm Nis}(X_k,\K^M_{d,X_n}/p^r) \ar[d]^{cl_{X_k}}  \\
H^{2d-q}_\et(X, \cal T_r(d)) \ar[r]^-\cong & H^{2d-q}_\et(X_k,i^*\cal T_r(d))   \\
}
$$
in which the bottom map is an isomorphism by \'etale proper base change and $cl_{X_k}$ is an isomorphism by Theorem \ref{theorem_comparison}. 
By \cite[Thm. 0.2]{LuMo20} there is an isomorphism 
$\CH^d(X)/p^r\cong H^{d}_{\rm Nis}(X,\hat{\cal K}^M_{d,X}/p^r)$.
By \cite[Proof of Cor. 2.7]{Lu16} and the Gersten conjecture for $\hat{\cal K}^M_{d,X}/p^r$ \cite[Thm. 0.2]{LuMo20}, there is an isomorphism
$\CH^d(X,1,\bb Z/p^r)\cong H^{d-1}_{\rm Nis}(X,\hat{\cal K}^M_{d,X}/p^r).$
It is an interesting question to ask if the maps $res^d_{/p^r}$ and $cl_X$ are isomorphisms (see \cite{Lueders2019}, \cite{SS14} for the surjectivity and \cite[Conj. 10.1]{KerzEsnaultWittenberg2016} for the original question in the case $q=0$).

\bibliographystyle{acm}
\bibliography{Bibliografie} 

\noindent
\parbox{0.5\linewidth}{
\noindent
Morten L\"uders \\
Universität Heidelberg\\
Mathematisches Institut \\
Im Neuenheimer Feld 205 \\
69120 Heidelberg \\
Germany\\
{\tt mlueders@mathi.uni-heidelberg.de}
}
\end{document}